\documentclass{article}

\newtheorem{theorem}{Theorem}

\newtheorem{corollary}[theorem]{Corollary}

\newtheorem{lemma}[theorem]{Lemma}

\newtheorem{proposition}[theorem]{Proposition}

\newenvironment{proof}[1][Proof]{\noindent\textbf{#1.} }{\ \rule{0.5em}{0.5em}}
\input{tcilatex}

\begin{document}

\begin{center}
{\Large Regular Variation and Raabe}

Christopher N.B.\ Hammond$^{1}$ and Edward Omey$^{2}$

\bigskip

$^{1}$Department of Mathematics and Statistics,

Connecticut College, New London, CT 06320

E-mail address: cnham@conncoll.edu

$^{2}$MEES, KU\ Leuven, Warmoesberg 26,

1000 Brussels, Belgium

E-mail address: edward.omey@kuleuven.be
\end{center}

\bigskip 

\begin{center}
\textbf{Abstract}
\end{center}

There are many tests for determining the convergence or divergence of
series. The test of Raabe and the test of Betrand are relatively unknown and
do not appear in most classical courses of analysis. Also, the link between
these tests and regular variation is seldomly made. In this paper we offer a
unified approach to some of the classical tests from a point of view of
regular varying sequences.

\bigskip 

Keywords: regular variation, Raabe's test, Karamata, convergence of series,
Gauss' test, Bertrand's test

MSC: 26A12, 40A05, 26D12

\bigskip 

\section{Introduction}

The aim of this paper is to offer a unified approach to some classical tests
of convergence (and divergence) of series with real or with positive terms.
In the paper among others we use Raabe's test that has been introduced by
Joseph Ludwig Raabe in 1832 and we make a link to regularly varying
sequences that have basically been introduced by Jovan Karamata in the
1930's.

\section{Regularly varying sequences}

\subsection{Definition}

A sequence $(c_{n})$ of positive numbers is regularly varying with index $%
\alpha \in \Re $ if it satisfies:%
\[
\lim_{n\rightarrow \infty }\frac{c_{\left[ nx\right] }}{c_{n}}=x^{\alpha
},\forall x>0\text{.} 
\]

We call such a sequence a \textit{regularly varying sequence} and we use the
notation: $(c_{n})\in RS_{\alpha }$.

\bigskip

There is a close relationship between regularly varying sequences and
regularly varying functions. A positive and measurable function $f(.)$ is
regularly varying with index $\alpha \in \Re $ if it satisfies:%
\[
\lim_{t\rightarrow \infty }\frac{f(tx)}{f(t)}=x^{\alpha },\forall x>0\text{.}
\]

Notation: $f\in RV_{\alpha }$.

Regularly functions were introduced by J.\ Karamata (1930, 1933) in
connection with Tauberian theorems. The relation between $RV$ and $RS$ was
established by Bojanic and Seneta (1973). They proved the following result:%
\[
(c_{n})\in RS_{\alpha }\text{ if and only if }f(x):=c_{\left[ x\right] }\in
RV_{\alpha }\text{.} 
\]

This result implies that all properties of regularly varying functions can
be reformulated in terms of regularly varying sequences. In the next section
we list some important properties of regularly varying sequences, see
Bojanic and Seneta (1973), Bingham et al. (1987, section 1.9), Geluk and de
Haan (1987, section I.3)

\subsection{Properties}

The following representation theorem holds.

\begin{theorem}
$(c_{n})\in RS_{\alpha }$ if and only if $c_{n}$ may be written as%
\begin{equation}
c_{n}=\alpha (n)n^{\alpha }\exp \sum_{k=1}^{n}k^{-1}\delta _{k}\text{,}
\label{1}
\end{equation}%
where $\alpha (n)\rightarrow C>0$ and $\delta _{n}\rightarrow 0$ as $%
n\rightarrow \infty $.
\end{theorem}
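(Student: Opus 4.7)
The plan is to handle the two directions asymmetrically: the sufficiency is a direct three-factor computation, while the necessity is routed through the Bojanic--Seneta bridge $(c_n)\in RS_\alpha \Leftrightarrow c_{[\,\cdot\,]}\in RV_\alpha$ already stated above, so that the continuous Karamata representation theorem (available from the cited references) can be invoked and then discretized.

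Sufficiency ($\Leftarrow$). Assume the representation and, for $x>1$, set $S_{n,x}:=\sum_{k=n+1}^{[nx]}k^{-1}\delta_k$ (use the reversed range and opposite sign for $x<1$; the case $x=1$ is trivial). Then
\[
\frac{c_{[nx]}}{c_n}=\frac{\alpha([nx])}{\alpha(n)}\left(\frac{[nx]}{n}\right)^{\!\alpha}\exp(S_{n,x}).
\]
Since $\alpha(m)\to C>0$, the first factor tends to $1$; since $[nx]/n\to x$, the second tends to $x^\alpha$. For the third, given $\varepsilon>0$ choose $N$ with $|\delta_k|<\varepsilon$ for $k\ge N$; then for $n\ge N$,
\[
|S_{n,x}|\le \varepsilon\sum_{k=n+1}^{[nx]}\frac{1}{k},
\]
whose right-hand side converges to $\varepsilon|\log x|$. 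Letting $\varepsilon\downarrow 0$ forces $S_{n,x}\to 0$, so $\exp(S_{n,x})\to 1$ and the ratio tends to $x^\alpha$.

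Necessity ($\Rightarrow$). Assume $(c_n)\in RS_\alpha$ and set $f(x):=c_{[x]}$ for $x\ge 1$; by the bridge cited above, $f\in RV_\alpha$. Apply Karamata's representation theorem for regularly varying functions to obtain
\[
f(x)=a(x)\,x^\alpha\exp\int_1^x \frac{\varepsilon(u)}{u}\,du, \qquad a(x)\to C>0,\ \varepsilon(u)\to 0.
\]
Evaluate at integer $x=n$, split the integral into unit intervals, and define
\[
\delta_k:=k\int_{k-1}^k \frac{\varepsilon(u)}{u}\,du\quad(k\ge 2),\qquad \delta_1:=0.
\]
A telescoping check gives $\sum_{k=1}^n k^{-1}\delta_k=\int_1^n \varepsilon(u)/u\,du$. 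Taking $\alpha(n):=a(n)$ yields the desired form with $\alpha(n)\to C>0$.

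The only genuine verification is that $\delta_k\to 0$. The estimate
\[
|\delta_k|\le k\log\!\left(\frac{k}{k-1}\right)\sup_{u\in[k-1,k]}|\varepsilon(u)|
\]
reduces this to the elementary limit $k\log(k/(k-1))\to 1$ combined with $\varepsilon(u)\to 0$. This bookkeeping between the integer and continuous versions of the representation is the main (and only mild) obstacle; everything else is routine.
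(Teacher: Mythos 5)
Your proof is correct. Note that the paper itself does not prove Theorem 1: it is stated as a known result with a pointer to the literature (Bojanic and Seneta (1973), Bingham et al. (1987, Section 1.9), Geluk and de Haan (1987)), so there is no in-paper argument to compare against; your write-up supplies what the paper omits. Your route is essentially the standard one from those references. The sufficiency direction is a clean three-factor computation, and the estimate $|S_{n,x}|\le\varepsilon\sum_{k=n+1}^{[nx]}k^{-1}\rightarrow\varepsilon\log x$ followed by $\varepsilon\downarrow 0$ is exactly what is needed to dispose of the exponential factor. The necessity direction correctly leans on the Bojanic--Seneta equivalence $(c_{n})\in RS_{\alpha}\Leftrightarrow c_{[\,\cdot\,]}\in RV_{\alpha}$, which the paper has already stated, and then discretizes the continuous Karamata representation; the choice $\delta_{k}:=k\int_{k-1}^{k}\varepsilon(u)u^{-1}\,du$, the telescoping identity, and the bound $|\delta_{k}|\le k\log\!\left(k/(k-1)\right)\sup_{u\in[k-1,k]}|\varepsilon(u)|$ are all sound, since $\varepsilon(u)\rightarrow 0$ does force the supremum over $[k-1,k]$ to vanish. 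Two pieces of bookkeeping are glossed over, neither of which is a genuine gap but both of which deserve a sentence: the continuous representation theorem in general only guarantees the form for $x\ge a$ for some $a\ge 1$, so you should either shift the base point of the integral to $a$ or observe that modifying finitely many $\delta_{k}$ and absorbing the resulting constant into $\alpha(n)$ changes neither $\delta_{n}\rightarrow 0$ nor $\alpha(n)\rightarrow C>0$; and one should say explicitly that $f(x)=c_{[x]}$ is measurable (it is a step function), since measurability is a hypothesis of the representation theorem being invoked.
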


From (1) we obtain the following corollary.

\begin{corollary}
(i) If $(c_{n})\in RS_{\alpha }$, then $\log c_{n}/\log n\rightarrow \alpha $%
.

(ii) If $(c_{n})\in RS_{\alpha }$, and $\alpha <0$, then $c_{n}\rightarrow 0$%
.

(iii) If $(c_{n})\in RS_{\alpha }$, and $\alpha >0$, then $c_{n}\rightarrow
\infty $.
\end{corollary}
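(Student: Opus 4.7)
My plan is to derive all three parts from the representation (\ref{1}) by taking logarithms. Writing
\[
\log c_{n}=\log \alpha (n)+\alpha \log n+\sum_{k=1}^{n}\frac{\delta _{k}}{k},
\]
the main analytic content is the estimate
\[
S_{n}:=\sum_{k=1}^{n}\frac{\delta _{k}}{k}=o(\log n)\quad \text{as }n\rightarrow \infty .
\]

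First I would establish this estimate by a standard truncation/Cesàro argument. Fix $\varepsilon >0$ and choose $N$ so that $|\delta _{k}|<\varepsilon $ for $k>N$. Then
\[
|S_{n}|\leq \sum_{k=1}^{N}\frac{|\delta _{k}|}{k}+\varepsilon \sum_{k=N+1}^{n}\frac{1}{k}\leq M_{N}+\varepsilon \log n
\]
for some constant $M_{N}$ independent of $n$, and dividing by $\log n$ and letting $n\rightarrow \infty $ and then $\varepsilon \rightarrow 0$ gives $S_{n}/\log n\rightarrow 0$. I expect this to be the only step requiring any genuine work; everything else is algebra.

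Given this, part (i) is immediate: dividing the displayed expression for $\log c_{n}$ by $\log n$, the term $\log \alpha (n)/\log n\rightarrow 0$ (since $\alpha (n)\rightarrow C>0$ implies $\log \alpha (n)\rightarrow \log C$, which is bounded), the term $\alpha \log n/\log n=\alpha $, and $S_{n}/\log n\rightarrow 0$.

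For parts (ii) and (iii), I would rewrite (\ref{1}) as $c_{n}=\alpha (n)\exp \bigl((\alpha +S_{n}/\log n)\log n\bigr)$, or equivalently $c_{n}=\alpha (n)n^{\alpha +\varepsilon _{n}}$ where $\varepsilon _{n}:=S_{n}/\log n\rightarrow 0$. If $\alpha <0$, take $n$ large enough that $\varepsilon _{n}<|\alpha |/2$; then $c_{n}\leq (\sup _{n}\alpha (n))\,n^{\alpha /2}\rightarrow 0$. If $\alpha >0$, take $n$ large enough that $\varepsilon _{n}>-\alpha /2$; then $c_{n}\geq (\inf _{n\geq n_{0}}\alpha (n))\,n^{\alpha /2}\rightarrow \infty $, where the infimum is positive because $\alpha (n)\rightarrow C>0$. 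This completes the plan.
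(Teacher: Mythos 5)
Your proof is correct and follows exactly the route the paper intends: the corollary is stated as an immediate consequence of the representation (\ref{1}), and your truncation estimate $\sum_{k=1}^{n}k^{-1}\delta _{k}=o(\log n)$ is precisely the detail needed to make that deduction rigorous. No gaps; the handling of (ii) and (iii) via $c_{n}=\alpha (n)n^{\alpha +\varepsilon _{n}}$ with $\varepsilon _{n}\rightarrow 0$ is sound.
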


\bigskip

Using Theorem 1 we also have the following algebraic properties.

\begin{corollary}
Suppose that $(c_{n})\in RS_{\alpha }$ and $(b_{n})\in RS_{\beta }$.

(i) We have $(c_{n}b_{n})\in RS_{\alpha +\beta }$ and $(c_{n}/b_{n})\in
RS_{\alpha -\beta }$.

(ii) For any real number $r$ we have $(c_{n}^{r})\in RS_{\alpha r}$.

(iii) We have $(c_{n}+b_{n})\in RS_{\max (\alpha ,\beta })$.

(iv) If $\beta <\alpha $, then $(c_{n}-b_{n})\sim c_{n}\in RS_{\alpha }$.
\end{corollary}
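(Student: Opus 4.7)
The plan is to prove the four parts in the order given, using the definition of $RS_\alpha$, the representation in Theorem 1, and Corollary 2(ii). Parts (i) and (ii) are essentially algebraic; parts (iii) and (iv) will reduce to (i) once the dominant sequence has been factored out.

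For (i), the ratio splits as
\[
\frac{c_{[nx]} b_{[nx]}}{c_n b_n} = \frac{c_{[nx]}}{c_n}\cdot\frac{b_{[nx]}}{b_n} \longrightarrow x^\alpha\, x^\beta = x^{\alpha+\beta},
\]
and the quotient is analogous. Alternatively, multiplying the representations (1) produces a representation of $c_n b_n$ with leading factor $\alpha(n)\beta(n)\to C_1C_2>0$ and remainder $\delta_k+\eta_k\to 0$. For (ii), I would either raise $c_{[nx]}/c_n$ to the $r$-th power and use continuity of $t\mapsto t^r$ on $(0,\infty)$, or raise the representation (1) to the $r$-th power so that $\alpha(n)\mapsto\alpha(n)^r\to C^r>0$, $n^\alpha\mapsto n^{\alpha r}$, and $\delta_k\mapsto r\delta_k\to 0$.

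For (iii), I would split on the comparison of $\alpha$ and $\beta$. If $\alpha>\beta$, set $r_n:=b_n/c_n$; by (i), $(r_n)\in RS_{\beta-\alpha}$ with negative index, so by Corollary 2(ii) we have $r_n\to 0$. Writing $c_n+b_n=c_n(1+r_n)$ gives
\[
\frac{c_{[nx]}+b_{[nx]}}{c_n+b_n} = \frac{c_{[nx]}}{c_n}\cdot\frac{1+r_{[nx]}}{1+r_n} \longrightarrow x^\alpha,
\]
and the case $\beta>\alpha$ is symmetric. The main obstacle is the equality case $\alpha=\beta$, in which $(r_n)\in RS_0$ need not converge to any constant. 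I would handle it by the identity
\[
\frac{1+r_{[nx]}}{1+r_n} - 1 = \frac{r_n}{1+r_n}\Bigl(\frac{r_{[nx]}}{r_n} - 1\Bigr),
\]
where the first factor is bounded by $1$ (as $r_n>0$) and the second tends to $0$ because $(r_n)\in RS_0$. Hence the above ratio tends to $1$ and, together with $c_{[nx]}/c_n\to x^\alpha$, gives $(c_n+b_n)\in RS_\alpha$.

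For (iv), with $\beta<\alpha$ we again have $r_n=b_n/c_n\to 0$ by Corollary 2(ii), so $c_n-b_n=c_n(1-r_n)$ is eventually positive and asymptotic to $c_n$. Since $(1-r_n)\to 1$ places $(1-r_n)\in RS_0$, applying (i) to this factorisation yields $(c_n-b_n)\in RS_\alpha$, as claimed.
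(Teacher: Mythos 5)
Your proof is correct, and it is worth noting that it follows a genuinely different route from the one the paper indicates. The paper gives no written argument at all: it simply says the corollary follows ``using Theorem 1,'' i.e.\ by substituting the representation $c_{n}=\alpha (n)n^{\alpha }\exp \sum_{k=1}^{n}k^{-1}\delta _{k}$ (and the analogous one for $b_n$) into each expression, so that for (i) and (ii) one reads off the new leading constant, the new power $n^{\alpha+\beta}$ or $n^{\alpha r}$, and the new null sequence $\delta_k+\eta_k$ or $r\delta_k$. You instead work directly from the defining limit $c_{[nx]}/c_{n}\to x^{\alpha}$, which is equally valid for (i) and (ii) and pays off in (iii) and (iv), where the representation-based route is less transparent. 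Your treatment of the equality case $\alpha=\beta$ in (iii) is the only genuinely delicate point of the whole corollary, and your identity
\[
\frac{1+r_{[nx]}}{1+r_{n}}-1=\frac{r_{n}}{1+r_{n}}\Bigl(\frac{r_{[nx]}}{r_{n}}-1\Bigr),\qquad r_{n}=b_{n}/c_{n}\in RS_{0},
\]
handles it cleanly, since the first factor lies in $(0,1)$ and the second tends to $0$; an equivalent device is to write the ratio as the convex combination $\frac{c_{n}}{c_{n}+b_{n}}\cdot\frac{c_{[nx]}}{c_{n}}+\frac{b_{n}}{c_{n}+b_{n}}\cdot\frac{b_{[nx]}}{b_{n}}$ of two quantities both tending to $x^{\alpha}$. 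The only cosmetic remark: in (iv) you should say explicitly that $1-r_{n}>0$ only for $n$ large (which suffices, since membership in $RS_{\alpha}$ and the relation $\sim$ are asymptotic notions), and in your alternative representation argument for (ii) the leading factor should be $\alpha(n)^{r}$ with $\alpha(n)>0$, as you indeed have it.
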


\bigskip

When we consider sums, we can use the following result known as Karamata's
theorem.

\begin{theorem}
Suppose that $(c_{n})\in RS_{\alpha }$.

(i) If $\alpha >-1$, then $\sum_{k=1}^{n}c_{k}\rightarrow \infty $ and $%
\sum_{k=1}^{n}c_{k}\sim nc_{n}/(\alpha +1)$.

(ii) If $\alpha <-1$, then $\sum_{k=1}^{\infty }c_{k}<\infty $ and $%
\sum_{k=n}^{\infty }c_{k}\sim nc_{n}/(-1-\alpha )$.

(iii) If $\alpha =-1$, then $nc_{n}/\sum_{k=1}^{n}c_{k}\rightarrow 0$.

(iv) If $\alpha =-1$ and $\sum_{k=1}^{\infty }c_{k}<\infty $, then $%
nc_{n}/\sum_{k=n}^{\infty }c_{k}\rightarrow 0$.
\end{theorem}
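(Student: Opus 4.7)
The plan is to reduce to the classical integral form of Karamata's theorem using the Bojanic--Seneta bridge quoted just before Theorem~1. Let $f(x) := c_{[x]}$, so that $f \in RV_{\alpha}$. Since $f$ is constant with value $c_{k}$ on each half-open interval $[k,k+1)$, one has the exact identities
\[
\int_{1}^{n+1} f(t)\,dt \;=\; \sum_{k=1}^{n} c_{k} \qquad \text{and} \qquad \int_{n}^{\infty} f(t)\,dt \;=\; \sum_{k=n}^{\infty} c_{k},
\]
the latter whenever the series converges. These identities turn each of the four claims about sums into a statement about partial or tail integrals of a regularly varying function, and in particular the convergence/divergence dichotomies in (i)--(iv) follow from the corresponding ones for the integrals.

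For the main step I would invoke the continuous Karamata theorem (see, e.g., Bingham, Goldie and Teugels (1987, \S1.5), or Geluk and de Haan (1987, \S I.2)): when $\alpha > -1$, $\int_{1}^{x} f \to \infty$ and $\int_{1}^{x} f \sim x f(x)/(\alpha+1)$; when $\alpha < -1$, $\int_{1}^{\infty} f < \infty$ and $\int_{x}^{\infty} f \sim x f(x)/(-1-\alpha)$; when $\alpha = -1$, $x f(x)/\int_{1}^{x} f \to 0$ and the analogous tail relation holds if $\int_{1}^{\infty} f$ converges. Substituting $x = n$ or $x = n+1$ and applying the identities above gives the right-hand sides of (i)--(iv) up to a factor of the form $c_{n+1}/c_{n}$.

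The last ingredient is the asymptotic $c_{n+1} \sim c_{n}$, which absorbs that spurious factor and is read off directly from Theorem~1: the ratio $\alpha(n+1)/\alpha(n) \to 1$ because $\alpha(n) \to C > 0$, the power factor $\bigl((n+1)/n\bigr)^{\alpha} \to 1$, and $\exp\bigl(\delta_{n+1}/(n+1)\bigr) \to 1$ because $\delta_{n} \to 0$. Once these pieces are in place the proof becomes bookkeeping. The only point requiring a bit of care is the transition between the variables $n$ and $n+1$ when evaluating the integral asymptotics at an integer, and I expect that to be the sole, very minor, obstacle; the substance of the theorem lies entirely in the continuous version, which the step-function identities transfer to the discrete setting without loss.
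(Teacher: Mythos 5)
Your argument is correct: the step-function identities $\int_{1}^{n+1}c_{[t]}\,dt=\sum_{k=1}^{n}c_{k}$ and $\int_{n}^{\infty}c_{[t]}\,dt=\sum_{k=n}^{\infty}c_{k}$ are exact, the Bojanic--Seneta correspondence places $f(x)=c_{[x]}$ in $RV_{\alpha}$ so the continuous Karamata theorem applies, and the factor $c_{n+1}/c_{n}\rightarrow 1$, read off from the representation in Theorem 1, closes the $n$ versus $n+1$ gap. The paper states this theorem without proof, citing Bojanic and Seneta (1973) and Bingham et al.\ (1987, section 1.9); your reduction to the function version is precisely the standard derivation in those references, so there is nothing substantive to contrast.
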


\bigskip

Note that the cases $\alpha <-1$ and $\alpha >-1$ allow to decide about
convergence or divergence of the series $\sum_{k=1}^{\infty }c_{k}$. If $%
\alpha =-1$, no such conclusion is possible. Both divergence and convergence
appear as the examples $c_{n}=1/(n\log (n+1)$ and $c_{n}=1/n(\log n)^{2}$
show. Karamata's theorem also gives an estimate of $\sum_{k=1}^{n}c_{k}$ in
the case of divergence or $\sum_{k=n}^{\infty }c_{k}$ in the case of
convergence.

In (1) and in the previous theoem, we assumed that $c_{\left[ nx\right]
}/c_{n}$ converges to a limit for all $x>1$. Now we consider a case where $%
c_{2n}/c_{n}$ is bounded. In Section 4 we provide additional results of this
type.

\begin{lemma}
Suppose that $c_{n+1}/c_{n}\rightarrow 1$ and that $a\leq c_{2n}/c_{n}\leq b$
for $n\geq n%
{{}^\circ}%
$.

(i) If $b<1/2$, then $\sum_{i=1}^{\infty }c_{i}<\infty $.

(ii) If $a>1/2$, then $\sum_{i=1}^{\infty }c_{i}=\infty $.
\end{lemma}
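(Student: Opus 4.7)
The plan is to partition the tail of the series into dyadic blocks
\[
B_{n} = \sum_{k=2^{n}}^{2^{n+1}-1} c_{k}
\]
and derive a geometric recursion of the form $B_{n} \approx 2(c_{2n}/c_{n})\,B_{n-1}$. Under hypothesis (i) the multiplier will eventually be strictly less than $1$, forcing convergence; under hypothesis (ii) it will eventually be strictly greater than $1$, forcing divergence. Since $\sum c_{k}$ differs from $\sum_{n} B_{n}$ only by a finite initial portion, this is enough.

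The key observation is a pairing of indices: as $j$ ranges over $[2^{n-1}, 2^{n}-1]$, the pairs $\{2j,2j+1\}$ partition $[2^{n}, 2^{n+1}-1]$, so
\[
B_{n} = \sum_{j=2^{n-1}}^{2^{n}-1} (c_{2j}+c_{2j+1}).
\]
Fix $\epsilon>0$. Since $c_{n+1}/c_{n}\to 1$, for all $m$ sufficiently large we have $c_{m+1}/c_{m} \in [1-\epsilon,\,1+\epsilon]$. Combining this with the hypothesis $a\le c_{2j}/c_{j}\le b$ gives, for $j$ large,
\[
a(2-\epsilon)\,c_{j} \;\le\; c_{2j}+c_{2j+1} = c_{2j}\bigl(1+c_{2j+1}/c_{2j}\bigr) \;\le\; b(2+\epsilon)\,c_{j}.
\]
Summing over $j \in [2^{n-1},\,2^{n}-1]$ and recognising the right-hand sum as $B_{n-1}$ yields
\[
a(2-\epsilon)\,B_{n-1} \;\le\; B_{n} \;\le\; b(2+\epsilon)\,B_{n-1}
\]
for all $n$ past some threshold.

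From here the two conclusions are automatic. For (i), since $b<1/2$, I choose $\epsilon$ small enough that $b(2+\epsilon)<1$; then $B_{n}$ decays geometrically, $\sum_{n} B_{n}$ converges, and hence $\sum c_{k}<\infty$. For (ii), since $a>1/2$, I choose $\epsilon$ with $a(2-\epsilon)>1$; then $B_{n}$ grows geometrically without bound, and $\sum c_{k}=\infty$.

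I do not expect a real obstacle, but the subtlety worth noting is that the hypotheses do \emph{not} directly yield a uniform bound on $c_{k}/c_{2^{n}}$ for $k\in[2^{n},2^{n+1})$: without monotonicity the individual ratios $c_{j+1}/c_{j}$ being close to $1$ is compatible with sizeable cumulative drift over $2^{n}$ steps. The pairing trick avoids this trap entirely by invoking only adjacent indices ($c_{2j+1}$ versus $c_{2j}$) and the doubled index ($c_{2j}$ versus $c_{j}$), both of which are quantitatively controlled by the hypotheses.
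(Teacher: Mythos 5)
Your proof is correct and follows essentially the same route as the paper: both arguments pair the terms $c_{2j}+c_{2j+1}$, bound this pair by $b(2+\epsilon)c_j$ (resp.\ below by $a(2-\epsilon)c_j$) using the adjacent-ratio and doubling-ratio hypotheses, and sum over a dyadic block to get the geometric recursion. The only cosmetic difference is that the paper anchors its blocks at $N2^{k}$ rather than $2^{n}$, which avoids having to discard an initial range of $n$.
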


\begin{proof}
Choose $\epsilon >0$ and $N\geq n%
{{}^\circ}%
$ so that $(1-\epsilon )c_{n}\leq c_{n+1}\leq (1+\epsilon )c_{n},\forall
n\geq N$. Using $c_{2n+1}/c_{n}=(c_{2n+1}/c_{2n})(c_{2n}/c_{n})$, we find
that%
\[
a\leq \frac{c_{2n}}{c_{n}}\leq b,a(1-\epsilon )\leq \frac{c_{2n+1}}{c_{n}}%
\leq b(1+\epsilon ),\forall n\geq N 
\]%
Now consider 
\[
S_{k}=(c_{N2^{k}}+c_{N2^{k}+1})+(c_{N2^{k}+2}+c_{N2^{k}+3})+...+(c_{N2^{k+1}-2}+c_{N2^{k+1}-1})%
\text{,} 
\]%
for $k=0,1,...$. Clearly for $k\geq 1$ we have 
\begin{eqnarray*}
S_{k} &\leq &(b+b(1+\epsilon ))c_{N2^{k-1}}+(b+b(1+\epsilon
))c_{N2^{k-1}+1}+...+(b+b(1+\epsilon ))c_{N2^{k}-1} \\
&=&b(2+\epsilon )S_{k-1}
\end{eqnarray*}%
Proceeding by induction, we have that%
\[
S_{k}\leq b^{k}(2+\epsilon )^{k}S_{0}, 
\]%
for $k=0,1,...$. If $b(2+\epsilon )<1$, it follows that $\sum_{k=0}^{\infty
}S_{k}=\sum_{i=N}^{\infty }c_{i}<\infty $. In a similar way we have 
\[
S_{k}\geq a^{k}(2-\epsilon )^{k}S_{0}\text{.} 
\]%
If $a(2-\epsilon )>1$, it follows that $\sum_{k=0}^{\infty
}S_{k}=\sum_{i=N}^{\infty }c_{i}=\infty $.
\end{proof}

\bigskip 

In the next result we provide a link to Raabe's type of sequences,cf.
Section 3 below. Observe that (1) implies that $c_{n}\sim b_{n}:=Cn^{\alpha
}\exp \sum_{k=1}^{n}k^{-1}\delta _{k}$.\ We have the following result.

\begin{theorem}
$(c_{n})\in RS_{\alpha }$ if and only if there is a sequence $(b_{n})$ of
positive numbers so that $c_{n}\sim b_{n}$ and 
\begin{equation}
\lim_{n\rightarrow \infty }n(\frac{b_{n+1}}{b_{n}}-1)=\alpha \text{,}
\label{2}
\end{equation}%
or equivalently%
\begin{equation}
\lim_{n\rightarrow \infty }n\log \frac{b_{n+1}}{b_{n}}=\alpha \text{.}
\label{3}
\end{equation}
\end{theorem}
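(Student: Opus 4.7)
The plan is to use the representation of Theorem 1 for the forward implication and a direct telescoping argument for the converse. For $(c_n) \in RS_\alpha$, I write $c_n = \alpha(n) n^\alpha \exp \sum_{k=1}^n k^{-1} \delta_k$ as in (1) and set $b_n := C n^\alpha \exp \sum_{k=1}^n k^{-1} \delta_k$, so that $c_n/b_n = \alpha(n)/C \to 1$ gives $c_n \sim b_n$. Then
$$\frac{b_{n+1}}{b_n} = \left(1 + \frac{1}{n}\right)^\alpha \exp\!\left(\frac{\delta_{n+1}}{n+1}\right),$$
and the expansions $(1 + 1/n)^\alpha = 1 + \alpha/n + O(1/n^2)$ together with $\exp(\delta_{n+1}/(n+1)) = 1 + o(1/n)$ (since $\delta_{n+1} \to 0$) yield $n(b_{n+1}/b_n - 1) \to \alpha$, which is (2).

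For the converse, assume $c_n \sim b_n$ with $b_n$ satisfying (2). The equivalence of (2) and (3) is immediate from $\log(1+y) = y + O(y^2)$ applied to $y = b_{n+1}/b_n - 1 = O(1/n)$, so I may work with (3). Writing $\epsilon_k := k \log(b_{k+1}/b_k) - \alpha$, we have $\epsilon_k \to 0$, and telescoping gives, for $x > 1$,
$$\log \frac{b_{[nx]}}{b_n} = \sum_{k=n}^{[nx]-1} \frac{\alpha + \epsilon_k}{k} = \alpha \bigl(H_{[nx]-1} - H_{n-1}\bigr) + \sum_{k=n}^{[nx]-1} \frac{\epsilon_k}{k},$$
where $H_m = \sum_{k=1}^m 1/k$. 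The first term tends to $\alpha \log x$, while the second is bounded in absolute value by $\sup_{k \geq n} |\epsilon_k| \cdot (H_{[nx]-1} - H_{n-1}) \to 0$. Hence $b_{[nx]}/b_n \to x^\alpha$, so $(b_n) \in RS_\alpha$; the case $0 < x < 1$ is handled symmetrically by reversing the summation index. Finally, $c_n \sim b_n$ transfers regular variation to $(c_n)$ by factoring $c_{[nx]}/c_n = (c_{[nx]}/b_{[nx]}) \cdot (b_{[nx]}/b_n) \cdot (b_n/c_n)$.

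The main obstacle is in the converse: one must argue that the perturbation $\sum_{k=n}^{[nx]-1} \epsilon_k/k$ genuinely vanishes in the limit, even though it is a sum of roughly $(x-1)n$ terms. The key observation is that the harmonic-type weight $1/k$ makes the partial sum grow only like $\log x$, so the uniform smallness of $\epsilon_k$ on the tail $k \geq n$ dominates. The forward direction is comparatively routine once the explicit representation from Theorem 1 is in hand.
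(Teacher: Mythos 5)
Your argument is correct. The forward direction is essentially the paper's own: both start from the representation $c_{n}=\alpha (n)n^{\alpha }\exp \sum_{k=1}^{n}k^{-1}\delta _{k}$ of Theorem 1, take $b_{n}=Cn^{\alpha }\exp \sum_{k=1}^{n}k^{-1}\delta _{k}$, and expand $b_{n+1}/b_{n}$. The converse is where you diverge. The paper telescopes $\log (b_{i+1}/b_{i})=\alpha (i)/i$ from a \emph{fixed} starting index $k$ up to $m$, rewrites the result as $\alpha \log m+\alpha E(m)+\sum_{i=k}^{m-1}(\alpha (i)-\alpha )/i$, and thereby recovers the representation (1), so that membership in $RS_{\alpha }$ follows by citing Theorem 1. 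You instead telescope over the \emph{moving window} from $n$ to $[nx]-1$ and verify the defining limit $b_{[nx]}/b_{n}\rightarrow x^{\alpha }$ directly, using that the harmonic sum over the window tends to $\log x$ while $\sup_{k\geq n}|\epsilon _{k}|\rightarrow 0$ kills the perturbation. The two computations are the same at heart (separate the harmonic part from the $o(1)$ coefficients), but your route is self-contained and does not lean on the representation theorem, at the cost of not recording the representation of $(b_{n})$ as a byproduct; the paper's route is shorter given Theorem 1 but leaves the final transfer from $(b_{n})$ to $(c_{n})$ implicit, which you make explicit via the factorization $c_{[nx]}/c_{n}=(c_{[nx]}/b_{[nx]})(b_{[nx]}/b_{n})(b_{n}/c_{n})$.
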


\begin{proof}
Using the representation (1), we have $c_{n}\sim b_{n}:=Cn^{\alpha }\exp
\sum_{k=1}^{n}k^{-1}\delta _{k}$. Using%
\[
n\log \frac{b_{n+1}}{b_{n}}=\alpha n\log (1+\frac{1}{n})+\frac{n}{n+1}\delta
_{n+1}\text{,} 
\]%
we obtain (3) and (3). Conversely, suppose that (3) holds. Define $\alpha
(n)=n\log (b_{n+1}/b_{n}),n\geq k$. We have $\alpha (n)\rightarrow \alpha $
and%
\[
\log \frac{b_{n+1}}{b_{n}}=\frac{\alpha (n)}{n}\text{.} 
\]%
It follows that for $m=k+1,k+2,...$ we have%
\[
\log \frac{b_{m}}{b_{k}}=\sum_{i=k}^{m-1}\log \frac{b_{i+1}}{b_{i}}%
=\sum_{i=k}^{m-1}\frac{\alpha (i)}{i} 
\]%
and 
\[
\log \frac{b_{m}}{b_{k}}-\alpha \log m=\alpha (\sum_{i=k}^{m-1}\frac{1}{i}%
-\log m)+\sum_{i=k}^{m-1}\frac{\alpha (i)-\alpha }{i}\text{.} 
\]%
Note that as $m\rightarrow \infty $, $E(m):=\sum_{i=k}^{m-1}\frac{1}{i}-\log
m$ converges to a finite constant. We see that%
\[
b_{m}=m^{\alpha }b_{k}\exp E(m)+\sum_{i=k}^{m-1}\frac{\delta (i)}{i},m\geq k%
\text{,} 
\]%
where $\delta (n)\rightarrow 0$. This gives representation (1).
\end{proof}

\bigskip

Remarks.

1) From (2) we have that $(b_{n+1}-b_{n})\sim \alpha b_{n}/n$. If $\alpha >0$
it follows that $b_{n}$ is an increasing sequence.\ If $\alpha <0$, then $%
b_{n}$ is a decreasing sequence. Hence if $(c_{n})\in RS_{\alpha },\alpha
\neq 0$, then $c_{n}\sim b_{n}$ where $b_{n}$ is a monotonic sequence.

2) In the case of $f\in RV_{\alpha }$, it is always possible (cf. Bingham et
al. 1987, Theorem 1.8.2) to find a function $g\in RV_{\alpha }$ so that $%
f(x)\sim g(x)$ and $xg^{\prime }(x)/g(x\rightarrow \alpha $ as $x\rightarrow
\infty $. This is the function-analogue to (2).

\section{Raabe}

\subsection{Raabe's test}

In Raabe's test we assume that 
\begin{equation}
\lim_{n\rightarrow \infty }n(\left\vert \frac{a_{n+1}}{a_{n}}\right\vert
-1)=\alpha \text{.}  \label{4}
\end{equation}%
Clearly (4) implies that $(\left\vert a_{n}\right\vert )\in RS_{\alpha }$.
Conversely, Theorem 6 shows that any regularly varying sequence is
asymptotically equal to a sequence for which (4) holds.

Using the results of the previous section, we immediately have the
following\ theorem that contains Raabe's test.

\begin{theorem}
Suppose that (4) holds.

(i) If $\alpha <0$, then $a_{n}\rightarrow 0$ while if $\alpha >0$, we have $%
\left\vert a_{n}\right\vert \rightarrow \infty $.

(ii) If $\alpha <-1$, then $\sum_{k=1}^{\infty }\left\vert a_{k}\right\vert
<\infty $ and $\sum_{k=n}^{\infty }\left\vert a_{k}\right\vert \sim
n\left\vert a_{n}\right\vert /(-1-\alpha )$.

(iii) If $\alpha >-1$, then $\sum_{k=1}^{n}\left\vert a_{k}\right\vert
\rightarrow \infty $ and $\sum_{k=1}^{n}\left\vert a_{k}\right\vert \sim
n\left\vert a_{n}\right\vert /(\alpha +1)$

(iv) If $\alpha =-1$, then $na_{n}/\sum_{k=1}^{n}\left\vert a_{k}\right\vert
\rightarrow 0$

(v) If $\alpha =-1$ and $\sum_{k=1}^{\infty }\left\vert a_{k}\right\vert
<\infty $, then $na_{n}/\sum_{k=n}^{\infty }\left\vert a_{k}\right\vert
\rightarrow 0$.
\end{theorem}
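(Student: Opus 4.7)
The plan is to reduce the theorem to the machinery already assembled in Section~2, exploiting the observation---made just before the statement---that the Raabe hypothesis (4) forces $(|a_{n}|)\in RS_{\alpha}$. To make the reduction explicit I would take $b_{n}:=|a_{n}|$, which is a positive sequence; then (4) is exactly condition (2) for $(b_{n})$, and trivially $|a_{n}|\sim b_{n}$, so Theorem~6 delivers $(|a_{n}|)\in RS_{\alpha}$. After that, every part of the theorem is a direct consequence of either Corollary~2 or Karamata's theorem (Theorem~4), applied to the positive sequence $(|a_{n}|)$.

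The remainder is a dispatch of cases. Part~(i) comes from Corollary~2: its part~(ii) gives $|a_{n}|\to 0$, hence $a_{n}\to 0$, when $\alpha<0$, and its part~(iii) gives $|a_{n}|\to\infty$ when $\alpha>0$. Parts~(ii) and (iii) are Theorem~4(ii) and Theorem~4(i) respectively, each supplying both the convergence/divergence of $\sum|a_{k}|$ and the stated asymptotic equivalence for the partial or tail sum. Parts~(iv) and (v) are Theorem~4(iii) and Theorem~4(iv), which yield $n|a_{n}|/\sum\to 0$; since the relevant denominators are positive, this is the same statement as $na_{n}/\sum\to 0$.

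The main obstacle, if one wishes to single one out, is just verifying the initial reduction step, but that is immediate: because $|a_{n}|$ is already a positive sequence, it can play the role of its own auxiliary $(b_{n})$ in Theorem~6, with no perturbation required. Thus the theorem essentially collects and re-labels results already proved, and the proof should be correspondingly short.
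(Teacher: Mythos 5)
Your proposal is correct and matches the paper's own (implicit) argument: the paper likewise states that the theorem follows "immediately" from the preceding section, i.e.\ (4) gives $(\left\vert a_{n}\right\vert )\in RS_{\alpha }$ via Theorem 6 (with $\left\vert a_{n}\right\vert$ serving as its own auxiliary sequence), after which Corollary 2 yields part (i) and Karamata's theorem (Theorem 4) yields parts (ii)--(v). You have simply made explicit the case dispatch the paper leaves to the reader.
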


\textbf{Examples.}

The example $a_{n}=(-1)^{n}\log (n+1)$ shows that $\alpha =0$ and $%
\left\vert a_{n}\right\vert \rightarrow \infty $.

The example $a_{n}=(-1)^{n}/\log (n+1)$ show that $\alpha =0$ and $%
a_{n}\rightarrow 0$.

The examples $a_{n}=(-1)^{n}$ and $a_{n}=(-1)^{n}n\left\vert \sin
1/n\right\vert $ show that $\alpha =0$ and $\left\vert a_{n}\right\vert
\rightarrow 1$.

The examples $a_{n}=(-1)^{n}n^{-1}\log (n+1)$ resp. $a_{n}=(-1)^{n}/n(\log
(n+1))^{2}$ show that in the case $\alpha =-1$ we can have (iv) or (v). 

\bigskip 

We need extra information to decide which case happens. If $\alpha =-1$,
relation (4) alone is not enough to decide about convergence or divergence
of $\sum_{k=1}^{n}\left\vert a_{k}\right\vert $.

\textbf{Remarks}.

1) If (4) holds, then for all $k=0,1,...$ and $r=1,2,...$ we have%
\[
\lim_{n\rightarrow \infty }(n+k)(\left\vert \frac{a_{n+r}}{a_{n}}\right\vert
-1)=\lim_{n\rightarrow \infty }(n+k)\log \frac{\left\vert a_{n+r}\right\vert 
}{\left\vert a_{n}\right\vert }=r\alpha \text{.}
\]

2) Raabe (1934) also proved the following result. Suppose that (4) holds and
that $0\leq a_{n}\rightarrow 0$. Let $f(.)$ denote a function so that $%
xf^{\prime }(x)/f(x)\rightarrow \beta $ as $x\rightarrow 0$. Then we have%
\[
n(\frac{f(a_{n+1})}{f(a_{n})}-1)\rightarrow \alpha \beta \text{.}
\]%
A similar result holds if $0\leq a_{n}\rightarrow \infty $ and $xf^{\prime
}(x)/f(x)\rightarrow \beta $ as $x\rightarrow \infty $. Such a result is
also provided in Bertrand (1842, page 3) who mentions A.\ De Morgan in his
analysis.

3) In view of Duhamel (1839), Raabe's test is often called the Raabe-Duhamel
test.

\subsection{The case $\protect\alpha =-1$}

The following result may be useful in the case where $\alpha =-1$.

\begin{proposition}
Let $\alpha (n)=n(\left\vert a_{n+1}/a_{n}\right\vert -1)\rightarrow \alpha $
and define $B(n)$ as 
\[
B(n)=\sum_{k=k%
{{}^\circ}%
}^{n}\frac{\alpha (k)-\alpha }{k}\text{, }n\geq k%
{{}^\circ}%
\geq 0\text{.} 
\]

(i) Suppose that $0\leq B(n)$ for all $n\geq k%
{{}^\circ}%
$ and $k%
{{}^\circ}%
$ sufficiently large. Then $\lim \inf_{n\rightarrow \infty }n^{-\alpha
}a_{n}>0$. Hence if $\alpha \geq -1$, we have $\sum_{k=1}^{n}\left\vert
a_{k}\right\vert \rightarrow \infty $.

(ii) The same conclusion holds when $B(n)\rightarrow L$, a finite limit.
\end{proposition}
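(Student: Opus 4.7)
The plan is to mimic the converse direction in the proof of Theorem 6, but without assuming convergence of $\alpha(n)-\alpha$ in a specific manner beyond the hypothesis. First, since $\alpha(n) \to \alpha$ the ratios $|a_{n+1}|/|a_n| = 1 + \alpha(n)/n$ are close to $1$, and I would write
\[
\log\frac{|a_{n+1}|}{|a_n|} = \log\!\left(1+\frac{\alpha(n)}{n}\right) = \frac{\alpha(n)}{n} + r_n,
\]
where $r_n = O(1/n^2)$ since $\alpha(n)$ is bounded. Telescoping from some $k^\circ$ up to $m-1$ yields
\[
\log\frac{|a_m|}{|a_{k^\circ}|} = \sum_{k=k^\circ}^{m-1}\frac{\alpha(k)}{k} + \sum_{k=k^\circ}^{m-1} r_k,
\]
and the second sum converges absolutely to a finite constant because $r_k = O(1/k^2)$.

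Next I would split off the contribution of $\alpha$: writing $\alpha(k)/k = (\alpha(k)-\alpha)/k + \alpha/k$ produces
\[
\log\frac{|a_m|}{|a_{k^\circ}|} = B(m-1) + \alpha\sum_{k=k^\circ}^{m-1}\frac{1}{k} + O(1),
\]
and the harmonic sum equals $\alpha\log m + \gamma_{k^\circ} + o(1)$ as $m\to\infty$ (with $\gamma_{k^\circ}$ a finite constant). Consequently,
\[
m^{-\alpha}|a_m| = C_m \,\exp\bigl(B(m-1)\bigr),
\]
where $C_m$ tends to a strictly positive finite constant $C$ as $m\to\infty$.

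From this identity both parts are essentially immediate. For (i), the hypothesis $B(n)\geq 0$ for $n\geq k^\circ$ gives $\exp(B(m-1))\geq 1$, so $m^{-\alpha}|a_m|\geq (C/2)>0$ eventually, hence $\liminf n^{-\alpha}|a_n|>0$. For (ii), if $B(n)\to L$ finite, then $\exp(B(m-1))\to e^L>0$ and $m^{-\alpha}|a_m|\to Ce^L>0$, which even gives an honest limit. The final assertion about divergence of $\sum|a_k|$ when $\alpha\geq -1$ then follows because the bound $|a_n|\geq c\,n^\alpha$ with $\alpha\geq -1$ yields $\sum|a_k|\geq c\sum n^\alpha = \infty$ by comparison with the harmonic (or a larger) series.

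The main technical obstacle is keeping the bookkeeping of the three kinds of remainder terms straight: the quadratic remainder from $\log(1+x)$, the Euler-type difference $\sum 1/k - \log m$, and any finite-sum initial segment up to $k^\circ$. Provided one verifies these are each $O(1)$, the proposition reduces to the key identity above, and no ratio-test hypothesis beyond $\alpha(n)\to\alpha$ (with $\alpha$ possibly $-1$) is needed.
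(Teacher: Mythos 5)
Your argument is correct and is essentially identical to the paper's own proof: both telescope $\log|a_m|$, split $\log(1+\alpha(k)/k)$ into $\alpha(k)/k$ plus a summable $O(1/k^2)$ remainder, separate $\alpha(k)/k$ into $(\alpha(k)-\alpha)/k + \alpha/k$, and use the Euler-constant asymptotics of the harmonic sum to reduce everything to $\log\bigl(m^{-\alpha}|a_m|\bigr) = O(1) + B(m-1)$. The only difference is cosmetic bookkeeping (you absorb the paper's $D(n)$ and $E(n)$ into a single convergent constant $C_m$), so no further comment is needed.
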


\begin{proof}
Using (4) we have $\alpha (n)\rightarrow \alpha $ and we have%
\[
\left\vert a_{n+1}\right\vert =\left\vert a_{n}\right\vert (1+\frac{\alpha
(n)}{n})\text{.}
\]%
For fixed $k%
{{}^\circ}%
$ it follows that $\left\vert a_{n+1}\right\vert =C\Pi _{k=k%
{{}^\circ}%
}^{n}(1+\alpha (k)/k)$ so that%
\[
\log \left\vert a_{n+1}\right\vert =C%
{{}^\circ}%
+\sum_{k=k%
{{}^\circ}%
}^{n}\log (1+\frac{\alpha (k)}{k})
\]%
We find that 
\[
\log \left\vert a_{n+1}\right\vert =C%
{{}^\circ}%
+D(n)+B(n)+\alpha E(n)+\alpha \log n\text{,}
\]%
where 
\begin{eqnarray*}
D(n) &=&\sum_{k=k%
{{}^\circ}%
}^{n}(\log (1+\frac{\alpha (k)}{k})-\frac{\alpha (k)}{k})\text{,} \\
B(n) &=&\sum_{k=k%
{{}^\circ}%
}^{n}\frac{\alpha (k)-\alpha }{k} \\
E(n) &=&\sum_{k%
{{}^\circ}%
}^{n}\frac{1}{k}-\log n
\end{eqnarray*}%
Note that (using $\alpha (k)/k\rightarrow 0$ and $\log (1+z)-z\sim z^{2}/2$,
as $z\rightarrow 0$) we have $\lim_{n\rightarrow \infty }D(n)\rightarrow
D<\infty $. Also we have $E(n)\rightarrow E$, a finite number. It follows
that%
\[
\log n^{-\alpha }\left\vert a_{n+1}\right\vert =C%
{{}^\circ}%
+D(n)+E(n)+B(n)\text{.}
\]%
If we assume that $0\leq B(n)$ for all $n\geq k%
{{}^\circ}%
$, then we have that 
\[
\log n^{-\alpha }\left\vert a_{n+1}\right\vert \geq C%
{{}^\circ}%
+D(n)+E(n)\text{.}
\]%
and $\lim \inf \log (n^{-\alpha }\left\vert a_{n+1}\right\vert )>-\infty $.
If $B(n)\rightarrow L$, then $\log n^{-\alpha }\left\vert a_{n+1}\right\vert
\rightarrow C%
{{}^\circ}%
{{}^\circ}%
$, a finite constant. In both cases, we can conclude that $%
\sum_{k=1}^{n}\left\vert a_{k}\right\vert \rightarrow \infty $ for $\alpha
\geq -1$.
\end{proof}

\bigskip

\textbf{Examples}.

1) Let $a_{n}=\Pi _{k=1}^{n}(2-e^{1/n})$. We have 
\[
n(\frac{a_{n+1}}{a_{n}}-1)=n(1-e^{1/(n+1)})\rightarrow -1
\]%
and we find that $(a_{n})\in RS_{-1}$. Here 
\[
\alpha (n)+1=n(1-e^{1/(n+1)})+1\sim \frac{1}{2n}
\]%
and $\sum_{k%
{{}^\circ}%
}^{\infty }(\alpha (k)+1)/k\rightarrow L$. It follows that $%
\sum_{k=1}^{n}\left\vert a_{k}\right\vert \rightarrow \infty $.

2) Take $a_{n}=(-1)^{n-1}\Gamma (2n+\beta +1)!/4^{n}(n!)^{2}$. We have 
\begin{eqnarray*}
\frac{\left\vert a_{n+1}\right\vert }{\left\vert a_{n}\right\vert } &=&(1+%
\frac{\beta }{2(n+1)})(1+\frac{\beta -1}{2(n+1)}) \\
&=&1+\frac{2\beta -1}{2(n+1)}+\frac{\beta (\beta -1)}{4(n+1)^{2}}
\end{eqnarray*}

and%
\[
n(\frac{\left\vert a_{n+1}\right\vert }{\left\vert a_{n}\right\vert }%
-1)\rightarrow \beta -1/2 
\]%
Using the notations as before, we have%
\[
\alpha (n)-(\beta -1/2)=\frac{2\beta -1}{2(n+1)}+\frac{n\beta (\beta -1)}{%
4(n+1)^{2}} 
\]%
and then we have 
\[
n(\alpha (n)-(\beta -1/2))\rightarrow \frac{1}{2}-\beta +\frac{\beta (\beta
-1)}{4} 
\]%
When $\beta =-1/2$ this is 
\[
n(\alpha (n)+1)\rightarrow \frac{19}{16} 
\]%
We conclude that $(\left\vert a_{n}\right\vert )\in RS_{\beta -1/2}$. If $%
\beta <-1/2$, then $\sum_{k=1}^{\infty }\left\vert a_{k}\right\vert <\infty $%
. If $\beta \geq -1/2$, then $\sum_{k=1}^{\infty }\left\vert
a_{k}\right\vert =\infty $.

3)\ Let $a_{n}=4^{n-1}((n-1)!)^{2}/((2n-1)!!)^{2}$. We have 
\[
\alpha (n)=n(\frac{a_{n+1}}{a_{n}}-1)=n(\frac{4n^{2}}{(2n+1)^{2}}-1)=-\frac{%
4n^{2}}{(2n+1)^{2}}\rightarrow -1
\]%
Also we have 
\[
\alpha (n)+1=1-\frac{4n^{2}}{(2n+1)^{2}}=\frac{4n+1}{(2n+1)^{2}}
\]%
and $n(\alpha (n)+1)\rightarrow 1$.

\subsection{An alternating Series}

Suppose that $a_{n}>0$ and consider the sequence $p_{n}=(-1)^{n}a_{n}$. For $%
\sum_{k=1}^{n}\left\vert p_{k}\right\vert $ we can use the results of the
previous section. In this section we study $\sum_{k=1}^{n}p_{k}$ and to this
end we define $b_{n}=a_{2n}$ for $n\geq 1$ and $c_{n}=a_{2n+1}$ for $n\geq 0$%
.

We assume that $\lim_{n\rightarrow \infty }n(a_{n+1}/a_{n}-1)=\alpha $ so
that $(a_{n})\in RS_{\alpha }$. As before we define $\alpha
(n)=n(a_{n+1}/a_{n}-1)$. Clearly we have $b_{n}\sim 2^{\alpha }a_{n}$ and $%
c_{n}\sim 2^{\alpha }a_{n}$.

\subsubsection{Case 1}

If $\alpha <-1$, then $\sum_{k=1}^{\infty }(-1)^{n}a_{n}=\sum_{k=1}^{\infty
}(c_{k}-b_{k})=\sum_{k=1}^{\infty }c_{k}-\sum_{k=1}^{\infty }b_{k}$ is finite

\subsubsection{Case 2}

If $\alpha >-1$, then $\sum_{k=1}^{\infty }c_{k}=\sum_{k=1}^{\infty
}b_{k}=\infty $. Now we consider $c_{n}-b_{n}$. Clearly we have 
\[
\frac{2n(c_{n}-b_{n})}{a_{2n}}=\frac{2n(a_{2n+1}-a_{2n})}{a_{2n}}=\alpha
(2n)\rightarrow \alpha 
\]

If $\alpha \neq 0$, then we have 
\[
c_{n}-b_{n}\sim \frac{\alpha }{2n}a_{2n}\sim \alpha 2^{\alpha -1}\frac{a_{n}%
}{n}\text{.} 
\]

so that $\left\vert c_{n}-b_{n}\right\vert \in RS_{\alpha -1}$.

If $\alpha <0$, then $\sum_{k=1}^{\infty }(c_{k}-b_{k})<\infty $ and $%
\sum_{k=n}^{\infty }(c_{k}-b_{k})\sim -2^{\alpha -1}a_{n}.$

If $\alpha >0$, then $\sum_{k=1}^{n}(c_{k}-b_{k})\sim 2^{\alpha
-1}a_{n}\rightarrow \infty $.

If $\alpha =0$, we have 
\[
c_{n}-b_{n}=\alpha (2n)\frac{a_{2n}}{2n}\sim \alpha (2n)\frac{a_{n}}{n}\text{%
.} 
\]

The asymptotic behaviour of the partial sum $\sum_{k=1}^{n}(c_{k}-b_{k})$
here depends on the speed of convergence in $\alpha (n)\rightarrow 0$.

\bigskip 

\textbf{Examples}

1) If $p_{n}=(-1)^{n}/\log (n+1)$ we have $\alpha =0$, $b_{n}=1/\log (2n+1)$
and $c_{n}=1/\log (2n+2)$. Also we have 
\begin{eqnarray*}
c_{n}-b_{n} &=&\frac{1}{\log (2n+2)}-\frac{1}{\log (2n+1)} \\
&=&-\frac{\log (1+1/(2n+2))}{\log (2n+1)\log (2n+1)} \\
&\sim &\frac{-1}{n(\log n)^{2}}
\end{eqnarray*}

Here we have $\sum_{1}^{n}(c_{k}-b_{k})\rightarrow L<\infty $.

2) If $p_{n}=(-1)^{n}\log (n+1)$ we have $\alpha =0$, $b_{n}=\log (2n+1)$
and $c_{n}=\log (2n+2)$. Also we have 
\[
c_{n}-b_{n}=\log (1+\frac{1}{2n+1})\sim \frac{1}{2n}
\]%
and $\sum_{1}^{n}(c_{k}-b_{k})\rightarrow \infty $.

3) If $p_{n}=(-1)^{n-1}\Gamma (2n+\beta +1)!/4^{n}(n!)^{2}$, then we have 
\[
n(c_{n}-b_{n})/a_{n}\rightarrow \alpha 2^{\alpha -1}\text{,}
\]%
where $\alpha =\beta -1/2$. Depending on $\beta $ we have convergence or
divergence.

4) Let $p_{n}=(-1)^{n}\exp n^{\theta }$ where $\theta <0$ is a parameter. We
have $a_{n}\rightarrow 1$ and%
\[
\log \frac{a_{n+1}}{a_{n}}=(n+1)^{\theta }-n^{\theta }=n^{\theta }((1+\frac{1%
}{n})^{\theta }-1)\sim \theta n^{\theta -1}\text{.}
\]%
It follows that%
\[
\alpha (n)=n(\frac{a_{n+1}}{a_{n}}-1)\sim \theta n^{\theta }\text{.}
\]%
Clearly we have 
\[
c_{n}-b_{n}=\alpha (2n)\frac{a_{2n}}{2n}\sim \theta 2^{\theta -1}n^{\theta
-1}\text{.}
\]%
Since $\theta <0$, we have $\sum_{k=1}^{\infty }(c_{k}-b_{k})<\infty $.

\section{Other related tests}

\subsection{Functions between Karamata functions}

We use the following notation.\ We say that $f\preceq g$ if $\lim \inf
g(x)/f(x)>0$ or $\lim \sup f(x)/g(x)<\infty $. We use the notation $f\asymp
g $ if $f\preceq g$ and $g\preceq f$. We use a similar notation for
sequences.

Now suppose $(\phi _{n})\in RS_{\alpha }$ and $(\varphi _{n})\in RS_{\beta }$
are 2 fixed regularly varying sequences. The class $M(\phi ,\varphi )$ is
the class of sequences $(a_{n})$ for which we can find constants $A,B,N>0$
so that%
\[
A\phi _{n}\leq a_{n}\leq B\varphi _{n},\forall n\geq N\text{.} 
\]

In our notation it means that $\phi _{n}\preceq a_{n}\preceq \varphi _{n}$.
Clearly we should have $\alpha \leq \beta $. Also, it is clear that $\beta
<0 $ implies that $a_{n}\rightarrow 0$ while if $\alpha >0$, we have $%
a_{n}\rightarrow \infty $. By taking logarithms, we also have that%
\[
\alpha \leq \lim \inf \frac{\log a_{n}}{\log n}\leq \lim \sup \frac{\log
a_{n}}{\log n}\leq \beta \text{.} 
\]

\textbf{Examples}

1) Take $a_{n}=\exp \left[ \log n\right] $. We have $\left[ \log n\right]
\leq \log n<\left[ \log n\right] +1$ and then $n/e\leq a_{n}\leq n$.

2) Take $a_{n}=\exp \left\{ \alpha \log n+\beta \sin n\log n\right\} $,$%
n\geq e$. We have $-\log n\leq \sin n\log n\leq \log n$. If\ $\beta >0$ this
gives%
\[
\exp \left\{ \alpha \log n-\beta \log n\right\} \leq a_{n}\leq \exp \left\{
\alpha \log n+\beta \log n\right\} 
\]%
so that $n^{\alpha -\beta }\leq a_{n}\leq n^{\alpha +\beta }$.

\bigskip 

We can easily use the properties of $RS$ when we consider the partial sums
of $a_{n}$.

\begin{proposition}
Suppose that $(a_{n})\in M(\phi ,\varphi )$.

(i) If $\alpha >-1$, then $(n^{-1}\sum_{k=1}^{n}a_{k})\in M(\phi ,\varphi )$.

(ii) If $\beta <-1$, then $\sum_{k=1}^{\infty }a_{k}<\infty $ and $%
(n^{-1}\sum_{k=n}^{\infty }a_{k})\in M(\phi ,\varphi )$.
\end{proposition}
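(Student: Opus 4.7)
The plan is to reduce everything to Karamata's theorem (Theorem 3) applied to the two bounding sequences $(\phi_n)$ and $(\varphi_n)$, and then to sandwich the partial (or tail) sums of $(a_n)$ between scaled partial (or tail) sums of $\phi$ and $\varphi$. The key preliminary observation is that the hypothesis $\phi_n \preceq a_n \preceq \varphi_n$ forces $\alpha \leq \beta$, so whichever condition ($\alpha > -1$ or $\beta < -1$) is in force automatically gives us the same type of condition for \emph{both} of the bounding sequences.

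For part (i), assume $\alpha > -1$, so $\beta \geq \alpha > -1$ as well. Starting from $A\phi_k \leq a_k \leq B\varphi_k$ for $k \geq N$ and summing, I would write
\[
\sum_{k=1}^{N-1} a_k + A\sum_{k=N}^{n}\phi_k \;\leq\; \sum_{k=1}^{n} a_k \;\leq\; \sum_{k=1}^{N-1} a_k + B\sum_{k=N}^{n}\varphi_k.
\]
By Theorem 3(i) applied to $(\phi_n) \in RS_\alpha$ and $(\varphi_n) \in RS_\beta$, the partial sums $\sum_{k=1}^n \phi_k$ and $\sum_{k=1}^n \varphi_k$ are asymptotic to $n\phi_n/(\alpha+1)$ and $n\varphi_n/(\beta+1)$ respectively, and both tend to infinity. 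The finite initial chunks $\sum_{k=1}^{N-1} a_k$ are absorbed into slightly enlarged constants since $n\phi_n, n\varphi_n \to \infty$. Dividing through by $n$, I obtain constants $A', B' > 0$ and an integer $N' \geq N$ for which
\[
A'\phi_n \;\leq\; \frac{1}{n}\sum_{k=1}^{n} a_k \;\leq\; B'\varphi_n, \quad \forall n \geq N',
\]
which is exactly the statement $(n^{-1}\sum_{k=1}^n a_k) \in M(\phi,\varphi)$.

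For part (ii), assume $\beta < -1$, so $\alpha \leq \beta < -1$ as well. Since $a_n \leq B\varphi_n$ for $n \geq N$ and $\sum \varphi_k < \infty$ by Theorem 3(ii), convergence of $\sum a_k$ is immediate. Summing the two-sided bound from $n$ to $\infty$ (no initial-segment nuisance at all here), Theorem 3(ii) yields $\sum_{k=n}^\infty \phi_k \sim n\phi_n/(-1-\alpha)$ and $\sum_{k=n}^\infty \varphi_k \sim n\varphi_n/(-1-\beta)$, so dividing by $n$ produces constants $A'', B'' > 0$ with $A''\phi_n \leq n^{-1}\sum_{k=n}^\infty a_k \leq B''\varphi_n$ for all sufficiently large $n$.

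There is really no hard step here; the only thing that requires a moment of care is the initial-segment handling in part (i), where one must verify that the fixed constant $\sum_{k=1}^{N-1} a_k$ is dominated by $n\varphi_n$ (and bounded below by $0$ times $n\phi_n$, trivially)—which is automatic because $n\varphi_n \in RS_{\beta+1}$ with $\beta+1 > 0$. Beyond that, the result is a direct corollary of Karamata applied twice.
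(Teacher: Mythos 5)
Your proof is correct and follows essentially the same route as the paper: sum the two-sided bound $A\phi_k \leq a_k \leq B\varphi_k$ over $k \geq N$, apply Karamata's theorem (Theorem 3) to both bounding sequences, and divide by $n$. The only difference is that you explicitly absorb the initial segment $\sum_{k=1}^{N-1}a_k$, a point the paper glosses over by working directly with $\sum_{k=N}^{n}a_k$; your extra care there is harmless and slightly tidier.
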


\begin{proof}
(i) Taking sums we have%
\[
A\sum_{N}^{n}\phi _{n}\leq \sum_{k=N}^{n}a_{k}\leq B\sum_{N}^{n}\varphi _{n} 
\]%
Since $\alpha >-1$ we have $\beta >-1$ and $\sum_{N}^{n}\phi _{n}\sim n\phi
_{n}/(\alpha +1)$ and $\sum_{N}^{n}\varphi _{n}\sim n\varphi _{n}/(\beta +1)$%
. We obtain that 
\[
\lim \inf \sum_{k=N}^{n}a_{k}/n\phi _{n}\geq A/(1+\alpha )>0 
\]%
and 
\[
\lim \sup \sum_{k=N}^{n}a_{k}/n\varphi _{n}\leq B/(1+\beta )<\infty \text{.} 
\]

(ii) Similar.
\end{proof}

The following proposition makes a connection with $\alpha (n)$.

\begin{proposition}
Let $\alpha (n)=n(a_{n+1}/a_{n}-1)$ and assume that $\alpha (n)\neq 0$ and $%
\alpha \leq \alpha (n)\leq \beta $ for all $n\geq N$. Then $(a_{n})\in
M(\phi _{n}=n^{\alpha },\varphi _{n}=n^{\beta })$.
\end{proposition}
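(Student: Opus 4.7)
The idea is to linearize the problem via logarithms. Since $a_{n+1}/a_n = 1 + \alpha(n)/n$, iteration gives
\[
\log \frac{a_n}{a_N} = \sum_{k=N}^{n-1} \log\left(1 + \frac{\alpha(k)}{k}\right)
\]
for $n > N$, so the task reduces to sandwiching the right-hand side between $\alpha \log n$ and $\beta \log n$, up to a bounded error term.

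First, I would enlarge $N$ if necessary so that $|\alpha(k)/k| \leq 1/2$ for all $k \geq N$; this is possible because $\alpha(k) \in [\alpha,\beta]$ is bounded. This also guarantees that each factor $1 + \alpha(k)/k$ is positive, so $a_n$ preserves its sign (which we may take to be positive) for $n \geq N$. On this range the elementary estimate $|\log(1+x) - x| \leq x^2$ yields
\[
\left|\sum_{k=N}^{n-1}\log\left(1+\frac{\alpha(k)}{k}\right) - \sum_{k=N}^{n-1}\frac{\alpha(k)}{k}\right| \leq M^2 \sum_{k=N}^{\infty}\frac{1}{k^2},
\]
where $M = \max(|\alpha|,|\beta|)$. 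The right-hand side is a finite constant, so the two sums differ by $O(1)$ as $n \to \infty$.

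Next I would exploit the hypothesis $\alpha \leq \alpha(k) \leq \beta$ to bound the linear sum:
\[
\alpha \sum_{k=N}^{n-1}\frac{1}{k} \leq \sum_{k=N}^{n-1}\frac{\alpha(k)}{k} \leq \beta \sum_{k=N}^{n-1}\frac{1}{k}.
\]
Combined with $\sum_{k=N}^{n-1} 1/k = \log n + O(1)$, the two one-sided estimates yield
\[
\alpha \log n - C_1 \leq \log a_n \leq \beta \log n + C_2
\]
for constants $C_1, C_2 > 0$ depending on $a_N$, $\alpha$, $\beta$, and $N$. Exponentiating produces constants $A, B > 0$ with $An^\alpha \leq a_n \leq Bn^\beta$, which is exactly the statement $(a_n) \in M(n^\alpha, n^\beta)$.

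The only mildly delicate point is controlling the quadratic remainder in the Taylor expansion of $\log(1+x)$; this is handled by the single preliminary enlargement of $N$ that makes $|\alpha(k)/k| \leq 1/2$. After that, the argument is entirely routine, consisting of two parallel sandwich estimates for the lower and upper bounds.
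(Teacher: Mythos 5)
Your proof is correct and follows essentially the same route as the paper: take logarithms of the product $\prod(1+\alpha(k)/k)$, split off the convergent quadratic remainder $\sum(\log(1+\alpha(k)/k)-\alpha(k)/k)$, sandwich the linear sum between $\alpha\sum 1/k$ and $\beta\sum 1/k$, and use $\sum_{k\le n}1/k=\log n+O(1)$ before exponentiating. Your preliminary enlargement of $N$ to guarantee $|\alpha(k)/k|\le 1/2$ is a small but welcome tightening of a step the paper leaves implicit.
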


\begin{proof}
We have 
\[
a_{n+1}=a_{n}(1+\frac{\alpha (n)}{n})=a_{k%
{{}^\circ}%
}\Pi _{k=k%
{{}^\circ}%
}^{n}(1+\frac{\alpha (k)}{k})\text{.}
\]%
Taking logarithms, we obtain that%
\[
\log a_{n+1}=\log a_{k%
{{}^\circ}%
}+D(n)+\sum_{k%
{{}^\circ}%
}^{n}\frac{\alpha (k)}{k}\text{,}
\]%
where 
\[
D(n)=\sum_{k%
{{}^\circ}%
}^{n}(\log (1+\frac{\alpha (k)}{k})-\frac{\alpha (k)}{k})
\]%
Using $\log (1+z)-z+O(z^{2})$ as $z\rightarrow 0$, it follows that $\lim
D(n)=L$, a finite limit. On the other hand, we have%
\[
\alpha \sum_{k%
{{}^\circ}%
}^{n}\frac{1}{k}\leq \sum_{k%
{{}^\circ}%
}^{n}\frac{\alpha (k)}{k}\leq \beta \sum_{k%
{{}^\circ}%
}^{n}\frac{1}{k}
\]%
Using $E(n)=\sum_{k%
{{}^\circ}%
}^{n}1/k-\log n$ and $\lim E(n)=E$, a finite limit, we have%
\[
\alpha E(n)+\alpha \log n\leq \sum_{k%
{{}^\circ}%
}^{n}\frac{\alpha (k)}{k}\leq \beta E(n)+\beta \log n\text{.}
\]%
It follows that%
\begin{eqnarray*}
&&\log a_{k%
{{}^\circ}%
}+D(n)+\alpha E(n)+\alpha \log n \\
&\leq &\log a_{n+1} \\
&\leq &\log a_{k%
{{}^\circ}%
}+D(n)+\beta E(n)+\beta \log n\text{.}
\end{eqnarray*}%
Hence $n^{\alpha }C^{\prime }(n)\leq a_{n+1}\leq n^{\beta }C(n)$ where $C(n)$
and $C^{\prime }(n)$ converge to a finite and positive limit. We conclude
that $(a_{n})\in M(\phi _{n}=n^{\alpha },\varphi _{n}=n^{\beta })$. This
proves the result.
\end{proof}

Note that we have $\alpha (n)=0$ i.o. if and only if $a_{n+1}=a_{n}$ i.o.

The proof shows the following result.

\begin{corollary}
Let $\alpha (n)=n(a_{n+1}/a_{n}-1)$ and assume that $\alpha (n)\neq 0$, $%
\forall n\geq N$.

(i) If $\alpha \leq \alpha (n),\forall n\geq N$, then $n^{\alpha }\preceq
a_{n}$

(ii) If $\alpha (n)\leq \beta ,\forall n\geq N$, then $a_{n}\preceq n^{\beta
}$.
\end{corollary}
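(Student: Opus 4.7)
The plan is to adapt the argument of Proposition 12, keeping only the inequality relevant to each case. The cleanest way to do this is to bypass the decomposition into $D(n)$ and $\sum\alpha(k)/k$ from that proof (which relied on boundedness of $\alpha(k)/k$ on both sides to ensure $D(n)\to L$), and instead exploit the monotonicity of $z\mapsto\log(1+z)$ on $(-1,\infty)$: a one-sided bound on $\alpha(k)$ translates directly into a one-sided bound on $\log(1+\alpha(k)/k)$.

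Starting from $a_{n+1}=a_n(1+\alpha(n)/n)$ and assuming without loss of generality that $a_n>0$ for $n\geq N$, I would iterate and take logs to obtain
\[
\log a_{n+1}-\log a_N=\sum_{k=N}^{n}\log\!\Bigl(1+\frac{\alpha(k)}{k}\Bigr).
\]
For (i), after enlarging $N$ so that $1+\alpha/k>0$, the hypothesis $\alpha(k)\geq\alpha$ combined with monotonicity of $\log(1+\cdot)$ yields $\log(1+\alpha(k)/k)\geq\log(1+\alpha/k)$. The Taylor expansion $\log(1+\alpha/k)=\alpha/k+O(1/k^{2})$ and the harmonic asymptotic $\sum_{k=N}^{n}1/k=\log n+O(1)$ reduce the right-hand side to $\alpha\log n+O(1)$. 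Hence $\log a_{n+1}\geq\alpha\log n+O(1)$, which is exactly $n^{\alpha}\preceq a_n$.

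Part (ii) follows by the mirror argument: $\alpha(k)\leq\beta$ gives $\log(1+\alpha(k)/k)\leq\log(1+\beta/k)$, whence $a_n\preceq n^{\beta}$. The only minor technicality is ensuring all the logarithms are well-defined: $1+\alpha(k)/k>0$ because $a_{n+1}/a_n>0$, and $1+\alpha/k>0$ (resp.\ $1+\beta/k>0$) for $k$ sufficiently large. I do not anticipate any serious obstacle; the argument is essentially a one-sided refinement of what the proof of Proposition 12 already contains, and the remark following the corollary (``The proof shows the following result'') suggests the authors view it in this way.
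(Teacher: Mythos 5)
Your argument is correct, and it is worth noting that it is not quite the route the paper itself takes. The paper derives this corollary from the proof of the preceding proposition, whose decomposition is $\log a_{n+1}=\log a_{k_0}+D(n)+\sum_{k}\alpha(k)/k$ with $D(n)=\sum_k\bigl(\log(1+\alpha(k)/k)-\alpha(k)/k\bigr)$; there the convergence of $D(n)$ is obtained from the two-sided bound $\alpha\leq\alpha(n)\leq\beta$, which forces $\alpha(k)/k\to 0$. You instead compare $\log(1+\alpha(k)/k)$ directly with $\log(1+\alpha/k)$ (resp.\ $\log(1+\beta/k)$) via monotonicity of $\log(1+\cdot)$, so you never need $D(n)$ to converge. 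This buys you something real in part (i): under the hypothesis of (i) alone, $\alpha(n)$ may be unbounded above, in which case $D(n)$ can diverge to $-\infty$ and the one-sided half of the paper's estimate, $\log a_{n+1}\geq \mathrm{const}+D(n)+\alpha\log n+o(1)$, no longer yields $n^{\alpha}\preceq a_{n}$; your comparison with the explicit product $\prod_k(1+\alpha/k)\asymp n^{\alpha}$ does. (For part (ii) the paper's route survives one-sidedly, since each summand of $D(n)$ is $\leq 0$ and therefore works in the right direction for an upper bound.) The only housekeeping to make explicit is that the finitely many indices $k$ with $1+\alpha/k\leq 0$ contribute only a fixed constant, and that positivity of the $a_{n}$ (implicit throughout this section) guarantees $1+\alpha(k)/k=a_{k+1}/a_{k}>0$ so all logarithms are defined; the hypothesis $\alpha(n)\neq 0$ plays no role in your argument, which is harmless since it plays none in the paper's either.
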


In the next result we study $a_{\left[ nx\right] }/a_{n}$.

\begin{proposition}
Let $\alpha (n)=n(a_{n+1}/a_{n}-1)$ and assume that $\alpha (n)\neq 0$ and $%
\alpha \leq \alpha (n)\leq \beta $ for all $n\geq N$. Then $(a_{n})$
satisfies%
\begin{eqnarray*}
x^{\alpha } &\leq &\lim \binom{\sup }{\inf }\frac{a_{\left[ xt\right] }}{a_{%
\left[ t\right] }}\leq x^{\beta },\forall x>1\text{,} \\
x^{\beta } &\leq &\lim \binom{\sup }{\inf }\frac{a_{\left[ xt\right] }}{a_{%
\left[ t\right] }}\leq x^{\alpha },\forall x,0<x\leq 1\text{.}
\end{eqnarray*}
\end{proposition}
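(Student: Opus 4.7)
The plan is to reuse, essentially verbatim, the telescoping identity established in the proof of the preceding proposition: starting from $a_{n+1} = a_{k^\circ}\prod_{k=k^\circ}^{n}(1+\alpha(k)/k)$, taking logarithms gives
\[
\log a_{n+1} = \log a_{k^\circ} + D(n) + \sum_{k=k^\circ}^{n}\frac{\alpha(k)}{k},
\]
where $D(n) = \sum_{k=k^\circ}^{n}\bigl(\log(1+\alpha(k)/k)-\alpha(k)/k\bigr)$. Since $\alpha(k)/k$ is bounded and $O(1/k)$, the inequality $|\log(1+z)-z|\leq z^2$ (for $|z|$ small) implies that $D(n)$ converges to a finite limit $L$.

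Subtracting this identity at two indices, I would write, for $x>1$ and $t$ large,
\[
\log\frac{a_{[xt]}}{a_{[t]}} = \bigl(D([xt]-1)-D([t]-1)\bigr) + \sum_{k=[t]}^{[xt]-1}\frac{\alpha(k)}{k}.
\]
The first bracket tends to $L-L=0$. For the sum, the hypothesis $\alpha\leq \alpha(k)\leq \beta$ yields
\[
\alpha\sum_{k=[t]}^{[xt]-1}\frac{1}{k} \;\leq\; \sum_{k=[t]}^{[xt]-1}\frac{\alpha(k)}{k} \;\leq\; \beta\sum_{k=[t]}^{[xt]-1}\frac{1}{k},
\]
and a standard harmonic-number estimate ($H_m = \log m + \gamma + o(1)$) gives $\sum_{k=[t]}^{[xt]-1}1/k \to \log x$ as $t\to\infty$. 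Taking $\liminf$ and $\limsup$ and exponentiating delivers the desired $x^\alpha \leq \liminf \leq \limsup \leq x^\beta$ for $x>1$.

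For $0<x<1$ the roles swap: with $m=[t]$, $n=[xt]$ the same identity gives
\[
\log\frac{a_{[t]}}{a_{[xt]}} = \bigl(D([t]-1)-D([xt]-1)\bigr) + \sum_{k=[xt]}^{[t]-1}\frac{\alpha(k)}{k},
\]
where the harmonic sum now tends to $\log(1/x)=-\log x>0$. Bounding the sum by $\alpha$ and $\beta$ times this quantity, then negating and exponentiating to pass back to $a_{[xt]}/a_{[t]}$, inverts the inequalities and replaces $\alpha\log x, \beta\log x$ (each now positive in absolute value) by their negatives, producing $x^\beta \leq \liminf \leq \limsup \leq x^\alpha$. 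The case $x=1$ is trivial.

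I don't foresee a serious obstacle: this is a quantitative refinement of the preceding proposition rather than a genuinely new argument. The only point requiring a little care is verifying that the $D$-difference vanishes in the limit (which relies on both $[t]-1$ and $[xt]-1$ tending to $\infty$, so that both $D$-values tend to the same limit $L$), together with the observation that the harmonic sum between $[t]$ and $[xt]-1$ converges to $\log x$ uniformly in the fractional parts of $t$ and $xt$.
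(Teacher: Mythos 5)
Your proposal follows essentially the same route as the paper's own proof: the same telescoping identity $\log a_{[xt]}/a_{[t]} = D([xt]-1)-D([t]-1)+\sum_{k=[t]}^{[xt]-1}\alpha(k)/k$, the same observation that the $D$-difference vanishes, the same bounding of the sum by $\alpha$ and $\beta$ times the harmonic sum converging to $\log x$, and the same symmetric treatment of $0<x\leq 1$. The argument is correct and matches the paper.
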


\begin{proof}
Using the notations as before, for $x>1$ and $t$ suffieciently large, we
have 
\[
\log \frac{a_{\left[ xt\right] }}{a_{\left[ t\right] }}=D(\left[ xt\right]
-1)-D(\left[ t\right] -1)+\sum_{\left[ t\right] }^{\left[ xt\right] -1}\frac{%
\alpha (k)}{k} 
\]%
We have $D(\left[ xt\right] -1)-D(\left[ t\right] -1)\rightarrow 0$ and 
\[
\alpha \sum_{\left[ t\right] }^{\left[ xt\right] -1}\frac{1}{k}\leq \sum_{%
\left[ t\right] }^{\left[ xt\right] -1}\frac{\alpha (k)}{k}\leq \beta \sum_{%
\left[ t\right] }^{\left[ xt\right] -1}\frac{1}{k}\text{.} 
\]%
\bigskip It follows that%
\[
\alpha \log x\leq \lim \binom{\sup }{\inf }\sum_{\left[ t\right] }^{\left[ xt%
\right] -1}\frac{\alpha (k)}{k}\leq \beta \log x\text{.} 
\]%
We conclude that 
\[
x^{\alpha }\leq \lim \binom{\sup }{\inf }\frac{a_{\left[ xt\right] }}{a_{%
\left[ t\right] }}\leq x^{\beta } 
\]

For $0<x\leq 1$, the proof is similar. This proves the result.
\end{proof}

\subsection{O-regular variation}

A sequence $(c_{n})$ of positive numbers is $O$-regularly varying if it
satisfies $c_{\left[ tx\right] }\preceq c_{\left[ t\right] },\forall x>0$.
We use the notation $(c_{n})\in ORS$. It can be proved that $(c_{n})\in ORS$
if and only if $c_{n}$ can be written as 
\[
c_{n}=d(n)\exp \sum_{k=1}^{n}k^{-1}\delta _{k}\text{,}
\]%
where $d(n)$ is bounded away from $0$ and from $\infty $, and $\delta _{n}$
is bounded. Note that $c_{n}\asymp b_{n}:=\exp \sum_{k=1}^{n}k^{-1}\delta
_{k}$, and we have%
\[
\frac{b_{n+1}}{b_{n}}-1=\exp \frac{\delta _{n+1}}{n+1}-1\sim \frac{\delta
_{n+1}}{n+1}
\]%
so that $\beta (n)$ defined by 
\[
\beta (n)=n(\frac{b_{n+1}}{b_{n}}-1)\text{,}
\]%
is bounded away from zero and infinity. As in the previous section it
follows that we can find real numbers $\alpha ,\beta $, and positive numbers 
$A,B,N$ so that $An^{\alpha }\leq b_{n}\leq Bn^{\beta },\forall n\geq N$.
The results of the previous section can be used.

\textbf{Remarks}

1) In Liflyland et al. (2011) the authors consider weak monotone sequences
defined as follows. Suppose that $a_{n}\geq 0$ and that $a_{n}\rightarrow 0$%
. The sequence is called weak monotone if there is a fixed constant $C$ so
that%
\[
a_{k}\leq Ca_{n},n\leq k\leq 2n\text{, }\forall n\geq 1\text{.}
\]%
For $1\leq x\leq 2$, we clearly have $a_{\left[ xn\right] }\leq Ca_{n}$. We
find that for $x\geq 1$ we have $a_{\left[ xn\right] }\preceq a_{n}$.

2) In Sayel A. Ali (2008) the author considers sequence so that $%
a_{2n}\asymp a_{n}$ and $a_{2n+1}\asymp a_{n}$.

\subsection{Gauss}

Gauss assumed that $\alpha (n)=n(a_{n+1}/a_{n}-1)\rightarrow p$ at a certain
rate.\ More precisely, Gauss assumed that $\alpha (n)=p+n^{1-r}B_{n}$, where 
$r>1$ and where $B_{n}$ is a bounded sequence. Clearly $(a_{n})\in RS_{p}$
and $\sum_{1}^{\infty }a_{n}<\infty $ if $p<-1$, $\sum_{1}^{\infty
}a_{n}=\infty $ if $p>-1$.

To study the case $p=-1$ we proceed as in the previous section and we find%
\[
\log a_{n+1}=\log a_{k%
{{}^\circ}%
}+D(n)+\sum_{k%
{{}^\circ}%
}^{n}\frac{\alpha (k)}{k}\text{,}
\]%
where $D(n)\rightarrow D$. We have 
\[
\log a_{n+1}-p\log n=C+D(n)+pE(n)+\sum_{k%
{{}^\circ}%
}^{n}\frac{B_{k}}{k^{r}}\text{.}
\]%
Clearly we have 
\[
\left\vert \sum_{k%
{{}^\circ}%
}^{n}\frac{B_{k}}{k^{r}}\right\vert \leq \sum_{k%
{{}^\circ}%
}^{n}\frac{\left\vert B_{k}\right\vert }{k^{r}}\text{.}
\]%
Since $B_{n}$ is bounded and $r>1$, we find that $\lim_{n\rightarrow \infty
}(\log a_{n+1}-p\log n)=F$, a finite limit. Hence we find that $a_{n}\sim
cn^{p}$ for some constant $c>0$. It is clear that\ $\sum_{1}^{\infty
}a_{n}=\infty $ if $p\geq -1$.

\subsection{Refining Bertrand's and Gauss' test}

Earlier we considered $\alpha (n)=n(\left\vert a_{n+1}/a_{n}\right\vert -1)$
and assumed that $\alpha (n)\rightarrow \alpha $. In this section we assume
that $b(n)(\alpha (n)-\alpha )$ is bounded or that $b(n)(\alpha (n)-\alpha
)\rightarrow \beta $.\ Here we assume that $0<b(n)\uparrow \infty $ and that 
$b(n)\geq 1$. As in Proposition 7, we have 
\[
\log n^{-\alpha }\left\vert a_{n+1}\right\vert =C%
{{}^\circ}%
+D(n)+\alpha E(n)+B(n)\equiv Q(n)+B(n)\text{,}
\]%
where 
\begin{eqnarray*}
D(n) &=&\sum_{k=k%
{{}^\circ}%
}^{n}(\log (1+\frac{\alpha (k)}{k})-\frac{\alpha (k)}{k})\text{,} \\
E(n) &=&\sum_{k%
{{}^\circ}%
}^{n}\frac{1}{k}-\log n\text{,} \\
B(n) &=&\sum_{k=k%
{{}^\circ}%
}^{n}\frac{\alpha (k)-\alpha }{k}=\sum_{k=k%
{{}^\circ}%
}^{n}\frac{b(k)(\alpha (k)-\alpha )}{b(k)k}\text{.}
\end{eqnarray*}%
Clearly $D(n)+E(n)\rightarrow c$, a finite limit. Now we consider $B(n)$.
The first result generalizes Gauss' test from the previous sectoin.

\begin{proposition}
Suppose that $b(n)(\alpha (n)-\alpha )$ is bounded and that $\sum_{k=k%
{{}^\circ}%
}^{\infty }1/(kb(k))<\infty $. Then $\left\vert a_{n}\right\vert \sim
Cn^{\alpha }$. The series $\sum_{k=1}^{n}\left\vert a_{k}\right\vert $
converges if and only if $\alpha <-1$ and diverges iff $\alpha \geq -1$.
\end{proposition}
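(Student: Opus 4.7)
The plan is to run the representation identity already developed just above Proposition~14 and show that under the stated hypotheses every piece on the right-hand side tends to a finite limit, so that $\log(n^{-\alpha}|a_{n+1}|)$ converges.

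First I would recall the decomposition
\[
\log n^{-\alpha}|a_{n+1}| = C^\circ + D(n) + \alpha E(n) + B(n),
\]
with $D(n)$, $E(n)$, $B(n)$ as defined immediately before the statement. From the discussion already in the paper (using $\log(1+z)-z=O(z^2)$ together with the fact that $\alpha(k)/k$ is bounded, and the standard Euler-type convergence of $E(n)$), we have $D(n)\to D$ and $E(n)\to E$, both finite. Hence the entire contribution $Q(n):=C^\circ + D(n)+\alpha E(n)$ converges to a finite constant, and the whole question reduces to the behaviour of $B(n)$.

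Second, I would use the two hypotheses simultaneously to show that $B(n)$ converges absolutely. Writing
\[
B(n) = \sum_{k=k^\circ}^{n} \frac{b(k)(\alpha(k)-\alpha)}{k\,b(k)},
\]
the boundedness assumption $|b(k)(\alpha(k)-\alpha)|\le M$ gives
\[
\sum_{k=k^\circ}^{\infty} \left|\frac{\alpha(k)-\alpha}{k}\right|
\le M\sum_{k=k^\circ}^{\infty}\frac{1}{k\,b(k)} < \infty,
\]
by the summability hypothesis on $1/(kb(k))$. So $B(n)\to B$, a finite limit. Combining the two, $\log(n^{-\alpha}|a_{n+1}|)\to Q+B$, i.e.\ $|a_{n+1}|\sim Cn^{\alpha}$ with $C=\exp(Q+B)>0$, and equivalently $|a_n|\sim Cn^{\alpha}$.

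Finally, the convergence/divergence dichotomy is immediate from the asymptotic $|a_n|\sim Cn^{\alpha}$ by comparison with the $p$-series $\sum n^{\alpha}$: one has $\sum|a_k|<\infty$ exactly when $\alpha<-1$ and $\sum|a_k|=\infty$ for $\alpha\ge -1$. There is no real obstacle here; the only point that needs a little care is justifying that $D(n)$ converges, but that is handled by the same $\log(1+z)-z\sim z^2/2$ argument already used in the proof of Proposition~7, combined with the fact that the hypotheses force $\alpha(k)\to\alpha$ (so $\alpha(k)/k\to 0$ and the quadratic tail is summable).
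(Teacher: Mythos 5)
Your proposal is correct and follows the same route as the paper: it uses the decomposition $\log n^{-\alpha}|a_{n+1}| = Q(n)+B(n)$ set up just before the statement, shows $Q(n)$ converges, and deduces $B(n)\to B$ from the boundedness of $b(k)(\alpha(k)-\alpha)$ together with $\sum 1/(kb(k))<\infty$. The only difference is that you spell out the absolute-convergence estimate for $B(n)$ and the convergence of $D(n)$ and $E(n)$, which the paper leaves implicit.
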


\begin{proof}
In this case we have that $B(n)\rightarrow B$, a finite number. It follows
that $\log \left\vert a_{n+1}\right\vert n^{-\alpha }\rightarrow c$ and
hence also that $\left\vert a_{n}\right\vert \sim Cn^{\alpha }$.
\end{proof}

\textbf{Examples}

1)If $b(n)=n\log n$, or $b(n)=n/\log n$, we have $\sum_{k=k%
{{}^\circ}%
}^{\infty }1/(kb(k))<\infty $

2) If $b(n)=n^{r}$, $r>0$, we have $\sum_{k=k%
{{}^\circ}%
}^{n}1/(kb(k))=\sum_{k%
{{}^\circ}%
}^{n}k^{-r-1}$ which converges to a finite limit.

3) Consider $a_{n}=((2n-1)!!/(2n)!!)^{a}$. We have 
\[
\frac{a_{n+1}}{a_{n}}=(\frac{(2n+1)!!(2n)!!}{(2n+2)!!(2n-1)!!})^{a}=(\frac{%
2n+1}{2n+2})^{a}
\]%
and 
\[
\frac{a_{n+1}}{a_{n}}-1=(1-\frac{1}{2n+2})^{a}-1\sim -a\frac{1}{2n}
\]%
We find that $\alpha (n)\rightarrow -a/2$,so that $(a_{n})\in RS_{-a/2}$.
Using Raabes test, we have convergence if if $a>2$ and divergence if $a<2$.
For $a=2$, we have%
\[
\frac{a_{n+1}}{a_{n}}-1=(1-\frac{1}{2n+2})^{2}-1=\frac{1}{4(n+1)^{2}}-\frac{1%
}{n+1}
\]%
and%
\[
\alpha (n)+1=n(\frac{a_{n+1}}{a_{n}}-1)=\frac{n}{4(n+1)^{2}}+\frac{1}{n+1}
\]%
We find that $n(\alpha (n)+1)\rightarrow 5/4$. We have $b_{n}=n$ and from
the remark after the previous result, we have $\sum_{k=1}^{n}\left\vert
a_{k}\right\vert \uparrow \infty $.

In the next result we assume that $b(n)(\alpha (n)-\alpha )\rightarrow \beta 
$ and that $\sum_{k=k%
{{}^\circ}%
}^{n}1/(kb(k))\uparrow \infty $. We choose $k%
{{}^\circ}%
$ sufficiently large so that $\beta -\epsilon \leq b(k)(\alpha (n)-\alpha
)\leq \beta +\epsilon $, $k\geq k%
{{}^\circ}%
$.

\begin{proposition}
Suppose that $b(n)(\alpha (n)-\alpha )\rightarrow \beta $ and that $\sum_{k=k%
{{}^\circ}%
}^{n}1/(kb(k))\uparrow \infty $. Choose $\epsilon >0$.

(i) If $\beta >0$, we have $\left\vert a_{n}\right\vert \preceq n^{\alpha
+\beta +\epsilon }$.

(ii) If $\beta <0$, we have $n^{\alpha +\beta -\epsilon }\preceq \left\vert
a_{n}\right\vert $.

(iii) If $\beta =0$, we have $n^{\alpha -\epsilon }\preceq \left\vert
a_{n}\right\vert \preceq n^{\alpha +\epsilon }$.
\end{proposition}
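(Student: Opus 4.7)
The plan is to reuse the logarithmic expansion already derived in the proof of Proposition 7, and restated at the start of Subsection 4.4:
\[
\log\bigl(n^{-\alpha}|a_{n+1}|\bigr) \;=\; Q(n) + B(n),
\]
where $Q(n)=C^{\circ}+D(n)+\alpha E(n)$ converges to a finite limit (so $Q(n)=O(1)$) and
\[
B(n) \;=\; \sum_{k=k^{\circ}}^{n}\frac{\alpha(k)-\alpha}{k} \;=\; \sum_{k=k^{\circ}}^{n}\frac{b(k)(\alpha(k)-\alpha)}{k\,b(k)}.
\]
All the asymptotic content sits in $B(n)$; the factoring in the second expression is what lets us exploit the new hypothesis on $b(n)(\alpha(n)-\alpha)$.

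Having fixed $\epsilon>0$ and chosen $k^{\circ}$ large enough that $\beta-\epsilon\le b(k)(\alpha(k)-\alpha)\le\beta+\epsilon$ for all $k\ge k^{\circ}$, and writing $S(n)=\sum_{k=k^{\circ}}^{n}1/(k\,b(k))$, the identity above immediately gives the sandwich
\[
(\beta-\epsilon)\,S(n) \;\le\; B(n) \;\le\; (\beta+\epsilon)\,S(n).
\]
The second crucial ingredient is the standing assumption $b(k)\ge 1$, which forces $S(n)\le\sum_{k=k^{\circ}}^{n}1/k=\log n+O(1)$; this is the bridge that converts statements about $S(n)$ into statements about $\log n$, and hence into the claimed power-of-$n$ bounds on $|a_n|$.

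The three cases now follow by chasing signs. In (i) the factor $\beta+\epsilon$ is positive, so $B(n)\le(\beta+\epsilon)\log n+O(1)$ and therefore $\log|a_{n+1}|\le(\alpha+\beta+\epsilon)\log n+O(1)$, giving $|a_n|\preceq n^{\alpha+\beta+\epsilon}$. In (ii) one picks $\epsilon$ small enough that $\beta-\epsilon<0$, and multiplying the upper bound $S(n)\le\log n+O(1)$ by this \emph{negative} factor reverses the inequality to produce $B(n)\ge(\beta-\epsilon)\log n+O(1)$, whence $n^{\alpha+\beta-\epsilon}\preceq|a_n|$. In (iii) the sandwich collapses to $|B(n)|\le\epsilon\,S(n)\le\epsilon\log n+O(1)$, which yields both bounds simultaneously. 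The only delicate point is the sign-flip in case (ii); note also that the divergence hypothesis $S(n)\uparrow\infty$ plays no active role in the argument --- its function is simply to demarcate this proposition from Proposition 12, where $S$ is summable and one obtains the sharper conclusion $|a_n|\sim Cn^{\alpha}$ with no $\epsilon$ slack.
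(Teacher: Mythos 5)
Your proof is correct and follows essentially the same route as the paper: the same decomposition $\log(n^{-\alpha}|a_{n+1}|)=Q(n)+B(n)$, the same sandwich $(\beta-\epsilon)S(n)\le B(n)\le(\beta+\epsilon)S(n)$, and the same use of $b(k)\ge 1$ to bound $S(n)$ above by $\log n+O(1)$ before chasing signs through the three cases. If anything, your version is slightly cleaner, since you correctly observe that only the upper bound on $S(n)$ is ever needed, whereas the paper also carries along a lower bound $\tfrac{1}{b(n)}\sum 1/k$ that plays no role in the stated conclusions.
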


\begin{proof}
We have%
\[
(\beta -\epsilon )\sum_{k=n%
{{}^\circ}%
}^{n}\frac{1}{kb(k)}\leq B(n)\leq (\beta +\epsilon )\sum_{k=n%
{{}^\circ}%
}^{n}\frac{1}{kb(k)}\text{.} 
\]

First consider the case where $\beta >0$.\ Since $b(k)\geq 1$, we have%
\[
\frac{1}{b(n)}\sum_{k=n%
{{}^\circ}%
}^{n}\frac{1}{k}\leq \sum_{k=n%
{{}^\circ}%
}^{n}\frac{1}{kb(k)}\leq \sum_{k=n%
{{}^\circ}%
}^{n}\frac{1}{k} 
\]%
and it follows that%
\[
(1-\epsilon )\frac{b(n)}{\log n}\leq \sum_{k=n%
{{}^\circ}%
}^{n}\frac{1}{kb(k)}\leq (1+\epsilon )\log n 
\]
\[
(\beta -\epsilon )(1-\epsilon )\frac{b(n)}{\log n}\leq B(n)\leq (\beta
+\epsilon )(1+\epsilon )\log n 
\]%
Going back to the sequence $\left\vert a_{n}\right\vert $, we find that 
\[
Q(n)+(\beta -\epsilon )(1-\epsilon )\frac{b(n)}{\log n}\leq \log \left\vert
a_{n+1}\right\vert n^{-\alpha }\leq Q(n)+(\beta +\epsilon )(1+\epsilon )\log
n. 
\]
We conclude that for any $\epsilon >0$ we have $\left\vert a_{n}\right\vert
\preceq n^{\alpha +\beta +\epsilon }$.

Next we consider the case where $\beta <0$. Now we have%
\[
(\beta -\epsilon )(1-\epsilon )\frac{b(n)}{\log n}\leq B(n)\geq (\beta
+\epsilon )(1+\epsilon )\log n 
\]%
and%
\[
Q(n)+(\beta -\epsilon )(1-\epsilon )\frac{b(n)}{\log n}\geq \log \left\vert
a_{n+1}\right\vert n^{-\alpha }\geq Q(n)+(\beta +\epsilon )(1+\epsilon )\log
n\text{.} 
\]%
We conclude that for any $\epsilon >0$ we have $\left\vert a_{n}\right\vert
\succeq n^{\alpha +\beta -\epsilon }$.

If $\beta =0$, in a similar way we find that for any $\epsilon >0$ we have $%
n^{\alpha -\epsilon }\preceq \left\vert a_{n}\right\vert \preceq n^{\alpha
+\epsilon }$. This proves the result.
\end{proof}

\begin{corollary}
Under the conditions of the Proposition we have

(i) If $\beta \geq 0$ and $\alpha +\beta <-1$, then $\sum_{1}^{\infty
}\left\vert a_{n}\right\vert <\infty $.

(ii) If $\beta \leq 0$ and $\alpha +\beta >-1$, then $\sum_{1}^{n}\left\vert
a_{n}\right\vert \uparrow \infty $.
\end{corollary}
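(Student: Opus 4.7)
The plan is to deduce this corollary directly from Proposition 15 by choosing $\epsilon$ small enough and then invoking the standard comparison test against $\sum n^{\gamma}$.

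For part (i), suppose $\beta \geq 0$ and $\alpha + \beta < -1$. Pick $\epsilon > 0$ so small that $\alpha + \beta + \epsilon < -1$; this is possible because $\alpha + \beta < -1$ strictly. If $\beta > 0$, Proposition 15(i) gives $|a_n| \preceq n^{\alpha + \beta + \epsilon}$, so there exist constants $C, N$ with $|a_n| \leq C n^{\alpha + \beta + \epsilon}$ for all $n \geq N$. Since $\alpha + \beta + \epsilon < -1$, the series $\sum n^{\alpha + \beta + \epsilon}$ converges, and hence so does $\sum |a_n|$ by comparison. If $\beta = 0$, one uses Proposition 15(iii) in the form $|a_n| \preceq n^{\alpha + \epsilon}$ with $\alpha + \epsilon < -1$, and the same comparison argument applies.

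For part (ii), suppose $\beta \leq 0$ and $\alpha + \beta > -1$. Pick $\epsilon > 0$ so small that $\alpha + \beta - \epsilon > -1$. If $\beta < 0$, Proposition 15(ii) gives $n^{\alpha + \beta - \epsilon} \preceq |a_n|$, so there exist $C', N'$ with $|a_n| \geq C' n^{\alpha + \beta - \epsilon}$ for $n \geq N'$. Since $\alpha + \beta - \epsilon > -1$, the series $\sum n^{\alpha + \beta - \epsilon}$ diverges, and hence so does $\sum |a_n|$. If $\beta = 0$, apply Proposition 15(iii) in the form $n^{\alpha - \epsilon} \preceq |a_n|$ with $\alpha - \epsilon > -1$, and conclude divergence in the same way.

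There is no real obstacle here; the only small subtlety is making sure the strict inequalities $\alpha + \beta < -1$ in (i) and $\alpha + \beta > -1$ in (ii) leave room to absorb the arbitrary $\epsilon$ coming from Proposition 15, and treating the boundary case $\beta = 0$ using part (iii) of that proposition rather than part (i) or (ii). Once $\epsilon$ is fixed, the conclusion is pure comparison with a $p$-series.
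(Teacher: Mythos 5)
Your proof is correct and is exactly the intended deduction: the paper states this corollary without proof, and the only natural argument is the one you give — shrink $\epsilon$ so the exponent from Proposition 15 stays strictly on the correct side of $-1$, then compare with the $p$-series. Your explicit handling of the boundary case $\beta=0$ via part (iii) of the Proposition is a point of care the paper leaves implicit.
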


\textbf{Examples}

1) If $b(n)=\log \log n$, we have $\sum_{k=n%
{{}^\circ}%
}^{n}1/(kb(k))\sim \log n/\log \log n$

2) If $b(n)=f(\log n)$, and $f(.)\in RV_{\theta },0\leq \theta <1$ and $%
f(x)\uparrow \infty $, then we have $\sum_{k=n%
{{}^\circ}%
}^{n}1/(kb(k))\sim \log n/((1-\theta )b(n))$

\bigskip

The proof of the previous result shows that $b(n)=\log n$ can lead to
interesting results. The next result is known as the test of Bertrand (1842).

\begin{proposition}
Assume that $b(n)(\alpha (n)-\alpha )\rightarrow \beta $ and that $b(n)=\log
n$. Then we have:

(i) If $\alpha <-1$, then $\sum_{1}^{\infty }\left\vert a_{n}\right\vert
<\infty $ and $n^{\alpha +1}(\log n)^{\beta -\epsilon }\preceq
\sum_{n}^{\infty }\left\vert a_{n}\right\vert \preceq n^{\alpha +1}(\log
n)^{\beta +\epsilon }$.

(ii) if $\alpha >-1$, then $\sum_{1}^{\infty }\left\vert a_{n}\right\vert
=\infty $ and $n^{\alpha +1}(\log n)^{\beta -\epsilon }\preceq
\sum_{1}^{n}\left\vert a_{n}\right\vert \preceq n^{\alpha +1}(\log n)^{\beta
+\epsilon }$.

(iii) if $\alpha =-1$ and $\beta >-1$, then $\sum_{1}^{\infty }\left\vert
a_{n}\right\vert =\infty $ and $(\log n)^{\beta +1-\epsilon }\preceq
\sum_{1}^{n}\left\vert a_{n}\right\vert \preceq (\log n)^{\beta +1+\epsilon
} $.

(iv) if $\alpha =-1$ and $\beta <-1$, then $\sum_{1}^{\infty }\left\vert
a_{n}\right\vert <\infty $ and $(\log n)^{\beta +1-\epsilon }\preceq
\sum_{n}^{\infty }\left\vert a_{n}\right\vert \preceq (\log n)^{\beta
+1+\epsilon }$.

(v) if $\alpha =-1$ and $\beta =-1$, then we can not come to a conclusion.
\end{proposition}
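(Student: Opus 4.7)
The plan is to reuse the identity established before Proposition 12, now specialized to $b(n)=\log n$, to obtain sharp two-sided bounds on $|a_n|$, and then to apply Karamata's theorem (Theorem 4) to pass from $|a_n|$ to its partial sums or tails.

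My starting point would be the identity
\[
\log(n^{-\alpha}|a_{n+1}|) = C_0 + D(n) + \alpha E(n) + B(n),
\]
in which $D(n)$ and $E(n)$ tend to finite limits. For the $B(n)$ term I would write
\[
B(n) = \sum_{k=k_0}^{n} \frac{\alpha(k)-\alpha}{k} = \sum_{k=k_0}^{n} \frac{\log k\cdot(\alpha(k)-\alpha)}{k\log k}.
\]
Since $\log k\cdot(\alpha(k)-\alpha)\to \beta$ and the weights $1/(k\log k)$ have divergent partial sums $\sum_{k_0}^n 1/(k\log k)\sim \log\log n$, a Cesàro/Kronecker averaging argument yields $B(n)=\beta\log\log n + o(\log\log n)$. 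Consequently $\log |a_n|=\alpha\log n+\beta\log\log n+O(1)+o(\log\log n)$, so for every $\epsilon>0$ one has eventually
\[
n^{\alpha}(\log n)^{\beta-\epsilon} \preceq |a_n| \preceq n^{\alpha}(\log n)^{\beta+\epsilon}.
\]
Both envelopes belong to $RS_\alpha$, so Karamata's theorem applies to them.

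The rest is case-by-case. In (i), $\alpha<-1$: Theorem 4(ii) gives $\sum_{k=n}^\infty k^\alpha(\log k)^{\beta\pm\epsilon}\sim n^{\alpha+1}(\log n)^{\beta\pm\epsilon}/(-1-\alpha)$; absorbing the constant into $\preceq$ and tightening $\epsilon$ slightly produces the stated sandwich. Case (ii), $\alpha>-1$, is the mirror image using Theorem 4(i). For (iii) and (iv), $\alpha=-1$, the envelopes become $n^{-1}(\log n)^{\beta\pm\epsilon}$; a direct summation (or integral test) gives $\sum_1^n k^{-1}(\log k)^\gamma\sim (\log n)^{\gamma+1}/(\gamma+1)$ for $\gamma>-1$ and $\sum_n^\infty k^{-1}(\log k)^\gamma\sim (\log n)^{\gamma+1}/(-\gamma-1)$ for $\gamma<-1$. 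Because $\beta\neq -1$ in each of these cases, one may take $\epsilon$ small enough that $\beta\pm\epsilon$ stays on the same side of $-1$, which delivers divergence with the stated estimate in (iii) and convergence with the stated tail estimate in (iv). For (v), $\alpha=\beta=-1$, I would simply exhibit two witnesses: $a_n=1/(n\log n)$ gives $\alpha=\beta=-1$ and a divergent series, while $a_n=1/\bigl(n\log n(\log\log n)^2\bigr)$ gives $\alpha=\beta=-1$ and a convergent series; both parameter computations amount to expanding $\log(a_{n+1}/a_n)$ to the required order.

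The main obstacle is the Cesàro step: carefully turning $\log n\cdot(\alpha(n)-\alpha)\to\beta$ together with $\sum 1/(k\log k)\sim\log\log n$ into $B(n)=\beta\log\log n\,(1+o(1))$, and then ensuring the $o(\log\log n)$ error in the exponent of $|a_n|$ is absorbed into a $(\log n)^{o(1)}$ factor that fits inside the $\epsilon$-slack in the final inequalities. Once the two-sided estimate on $|a_n|$ is in place with the correct quantifier on $\epsilon$, the partial-sum and tail estimates follow as a direct application of Karamata's theorem.
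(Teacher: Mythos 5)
Your proposal is correct and follows essentially the same route as the paper: both use the decomposition $\log(n^{-\alpha}|a_{n+1}|)=Q(n)+B(n)$, sandwich $B(n)$ between $(\beta\pm\epsilon)\log\log n$ (your Ces\`aro phrasing is equivalent to the paper's term-by-term bound $\beta-\epsilon\le b(k)(\alpha(k)-\alpha)\le\beta+\epsilon$ summed against $\sum 1/(k\log k)\sim\log\log n$), deduce $n^{\alpha}(\log n)^{\beta-\epsilon}\preceq |a_n|\preceq n^{\alpha}(\log n)^{\beta+\epsilon}$, and then sum via Karamata. You in fact supply more detail than the paper, which dismisses the five cases with ``now follow''; your explicit witnesses $1/(n\log n)$ and $1/(n\log n(\log\log n)^2)$ for case (v) are a welcome addition the paper only gestures at via references.
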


\begin{proof}
In this case we have $\sum_{k=n%
{{}^\circ}%
}^{n}1/(kb(k))\sim \log \log n$ and we find that
\end{proof}

\[
(\beta -\epsilon )\log \log n\leq B(n)\leq (\beta +\epsilon )\log \log n 
\]%
It follows that%
\[
Q(n)+(\beta -\epsilon )\log \log n\leq \log \left\vert a_{n+1}\right\vert
n^{-\alpha }\leq Q(n)+(\beta +\epsilon )\log \log n\text{,} 
\]%
and $W(n)n^{\alpha }(\log n)^{\beta -\epsilon }\leq \left\vert
a_{n+1}\right\vert \leq W^{\prime }(n)n^{\alpha }(\log n)^{\beta +\epsilon }$%
, where $W(n)$ and $W^{\prime }(n)$ converge to finite and positive limits.
The five cases of the proposition now follow.

In the case where $\alpha =\beta =-1$, the test does not lead to a
conclusion. Extensions that take care of this case go back to Martin (1941),
see also Rajagopal (1952).

\bigskip

\section{References}

\begin{enumerate}
\item N.H.\ Bingham, C.M.\ Goldie and J.L. Teugels.\ Regular Variation.
Cambridge University Press 1987.

\item J.\ Bertrand. R\`{e}gles sur la convergence des s\'{e}ries.\ Journal
de Math. (1), 7, 35 - 54, 1842.

\item R. Bojanic and E.\ Seneta. A unified theory of regularly varying
sequences. Math.\ Zeitschrift 134, 91 - 106, 1973

\item M.\ Cadena, M.\ Kratz and E.\ Omey. On functions bounded by Karamata
functions.\ To appear, 2018??

\item F. Cajori, Evolution of criteria of convergence, Bull. New York Math.
Soc. 2 1892.

\item M.\ Duhamel. Nouvelle r\`{e}gle pour la convergence des s\'{e}ries. J.
de math\'{e}matiques pures et appliqu\'{e}es, s\'{e}rie 1, tome 4, p.
214-221, 1839.

\item J.L.\ Geluk and L. de Haan. Regular variation, extensions and
Tauberian theorems. CWI Tract 40, Amsterdam, 1987.

\item W.\ Feller. An introduction to Probability Theory and Its
Applications.\ Vol.\ II, 2nd Edition Wiley, NewYork, 1971.

\item C.N.B. Hammond. The case for Raabe's test. Preprint available on
ArXiv:1801.07584v1, 2018.

\item J. Karamata. Sur un mode de croissance r\'{e}guli\`{e}re des
fonctions.\ Mathematica (Cluj) 4, 38 - 53, 1930.

\item J.\ Karamata. Sur un mode de croissance r\'{e}guli\`{e}re.\ Th\'{e}or%
\`{e}mes fondamontaux.\ Bull.\ Soc.\ Math.\ France 61, 55 62, 1933.

\item E. Liflyand, S.\ Tikhonov and M.Zeltser. Extending tests for
convergence of number series. J.\ Math.An.and Appl. 377, 194 - 206, 2011

\item M.\ Martin, A sequence of limit tests for the convergence of series.
Bull. Amer. Math. Soc. 47, 452-457, 1941.

\item F. Prus-Wisniowski. Comparison of Raabe's and Schl\"{o}mlich's tests.
Tatra Mt. Math. Publ. 42, 119 - 130, 2009.

\item J.L. Raabe, Untersuchungen uber die Convergenz und Divergenz der
Reihen, Zeitschrift f\"{u}r Physik und Mathematik 10 (1832), 41-74.

\item J.L.\ Raabe, Note zur Theorie der Convergenz and Divergenz der Reihen.
Journal f\"{u}r die reine und angewandte Mathematik,309 - 310, 1834.

\item C. T. Rajagopal, Sui criteri del rapporto per la convergenza delle
serie a termini positivi. Bollettino dell'Unione Matematica Italiana, Serie
3, Vol. 7 (4), 382 - 387, 1952.

\item Sayel A. Ali. The $m$-th Ratio Test: New Convergence Tests for Series.
American Math.\ Monthly 115, 514 - 523, 2008.

\item E.\ Seneta.\ Functions of regular variation.\ Lecture Notes
Mathematics,Vol. 506, 1976.
\end{enumerate}

\end{document}